\newtheorem{thm}{Theorem}[section]
\newtheorem*{thm*}{Theorem}
\newtheorem{prop}[thm]{Proposition}
\newtheorem*{cor*}{Corollary}
\newtheorem{lem}[thm]{Lemma}
\newtheorem*{quest*}{Question}
\newtheorem*{arg*}{Argument Principle}
\newtheorem*{tar*}{Tarski-Seidenberg Theorem}
\newtheorem*{cad*}{Cylindrical Algebraic Decomposition}
\newtheorem*{dt*}{Dimension Theorem}
\theoremstyle{remark}
\theoremstyle{remark}
\newtheorem*{rmk*}{Remark}
\theoremstyle{definition}
\newtheorem{defn}[thm]{Definition}
\theoremstyle{definition}
\newtheorem*{defn*}{Definition}
\numberwithin{equation}{section}
\newcommand{\Imag}{\textrm{Im}}
\newcommand{\rat}{\mathcal{R}}
\newcommand{\poly}{\mathcal{P}}
\newcommand{\masses}{\mathcal{M}}
\newcommand{\zar}[1]{\text{Zariski}(#1)}
\renewcommand{\dim}[1]{\text{dim}(#1)}
\begin{document}

\title[Counting Zeros of Harmonic Rational Functions]{Counting Zeros of Harmonic Rational Functions and Its Application to Gravitational Lensing}

\author[P. M.  ~Bleher]{Pavel M. Bleher$^1$}
\author[Y. ~Homma]{Youkow Homma$^{1,2}$}
\author[L. L. ~Ji]{Lyndon L. Ji$^{1,2}$}
\author[R. K. W. ~Roeder]{Roland K.\ W.\ Roeder$^1$}

\markboth{\textsc{P. Bleher, Y. Homma, L. Ji, and R. Roeder}}
  {\textit{Counting Zeros of Harmonic Rational Functions and Its Application to Gravitational Lensing}}

\footnotetext[1]{
Department of Mathematical Sciences, Indiana University-Purdue University Indianapolis, 402 N. Blackford St., Indianapolis, IN 4
6202, USA,
and}
\footnotetext[2]{Carmel High School, 520 E. Main St., Carmel, IN 46032, USA.}

\begin{abstract}
General Relativity gives that finitely many point masses between an observer and a light source create many images of the light source. Positions of these images are solutions of $r(z)=\bar{z},$ where $r(z)$ is a rational function. We study the number of solutions to $p(z) = \bar{z}$ and $r(z) = \bar{z},$ where $p(z)$ and $r(z)$ are polynomials and rational functions, respectively. Upper and lower bounds were previously obtained by Khavinson-\'{S}wi\c{a}tek, Khavinson-Neumann, and Petters. Between these bounds, we show that any number of simple zeros allowed by the Argument Principle occurs and nothing else occurs, off of a proper real algebraic set. If $r(z) = \bar{z}$ describes an $n$-point gravitational lens, we determine the possible numbers of generic images.
\end{abstract}
\maketitle

\section{Introduction}

One of the results of Einstein's General Theory of Relativity is that a point mass placed between an observer and a light source will create two images of the source. If this single mass is replaced with a distribution of masses, significantly more complicated configurations of images can be created. Multiple images were first observed by astronomers in the 1970's and further technological advancements pushed gravitational lensing as an important tool in astrophysics. Gravitational lensing has also become an exciting field of research in mathematical physics---see the recent beautiful surveys \cite{FASSNACHT, KHAVINSON-NEUMANN2, PETTERS_SURVEY, PETTERS-WERNER} and for deeper discussion, including the history, see \cite{PETTERS} and \cite{SCHNEIDER}.

Suppose that the distribution of mass is well-localized relative to the distances between the observer and the mass and relative to the distances between the masses and the light source. Images of the light source are described by solutions for $z$ to
\begin{equation}\label{CAUCHY INTEGRAL}
\bar{z} = \displaystyle\int_{\mathbb{C}} \frac{d\mu (\zeta)}{z-\zeta},
\end{equation}
where $\mu$ is a compactly supported measure describing the distribution of mass projected onto the plane through the center of mass perpendicular to the line from the observer to the light source. See, for example, \cite{FASSNACHT} and \cite{STRAUMANN}.

An early and important result in gravitational lensing, due to Burke \cite{BURKE}, is that if $\mu$ is a smooth mass distribution, then the number of solutions to \eqref{CAUCHY INTEGRAL} is odd. This was generalized by Petters \cite{PETTERS2} to the situation where $\mu$ has smooth density except at $g$ points. In this case, he showed that the number of solutions is congruent to $(g-1)$ modulo $2$. See also \cite[Thm. 1]{PETTERS-WERNER}.

In this paper, we will focus on the case of $n$ point masses. For $1\le j \le n$, let $\sigma_j$ be a positive mass located at $z_j$. In this case, $\mu = \displaystyle\sum_{j=1}^{n} \sigma_j\delta(z-z_j)$ and \eqref{CAUCHY INTEGRAL} simplifies to become 
\begin{equation}\label{LENS EQUATION}
\bar{z} = \sum_{j=1}^{n} \frac{\sigma_j}{z - z_j}.
\end{equation}
For the remainder of the paper, we refer to \eqref{LENS EQUATION} as the \textit{lens equation}.

It is also interesting to study variations of Equation~\eqref{LENS EQUATION}, replacing the sum on the right-hand side with a general polynomial $p(z)$ or rational functions $r(z)$. In the polynomial case, Khavinson and \'{S}wi\c{a}tek \cite{KHAVINSON-SWIATEK} used a clever application of the Fatou Lemma from holomorphic dynamics combined with the ``Argument Principle" \cite{ARGUMENT PRINCIPLE} to show that if $p(z)$ has degree $n$, then the number of solutions to $p(z) = \bar{z}$ is bounded above by $3n-2$. It was a delicate question of whether this upper bound was achieved for each $n$. Using Thurston's Theorem from rational dynamics, Geyer \cite{GEYER 1} proved the sharpness of this bound.

In the rational case, Khavinson and Neumann \cite{KHAVINSON-NEUMANN} used similar techniques as \cite{KHAVINSON-SWIATEK} to prove that if $r(z)$ has degree $n$, then the number of solutions is bounded above by $5n-5$. (Note that $r(z) = p(z)/q(z)$ has degree $n = \max(\deg p, \deg q)$.) Surprisingly, the sharpness of this bound for each $n$ had already been proved by Rhie \cite{RHIE}, using an explicit construction of an appropriate configuration of masses in the lens equation \eqref{LENS EQUATION}. 

Let $U$ be an open subset of $\mathbb{C}$. A function $f: U \to \mathbb{C}$ is called \emph{harmonic} if both the real and imaginary parts of $f$ are harmonic in the classical sense. A zero $z_0 = x_0 + iy_0$ of $f(x,y) = u(x,y) +  iv(x,y)$ is called \emph{simple} if the Jacobian $\mathcal{D}(z_0) = \det{\bigl|\begin{smallmatrix} u_x&u_y\\v_x&v_y\end{smallmatrix} \bigr|} \not= 0$. We call a polynomial $p(z)$ simple if all of the zeros of $p(z) - \bar{z}$ are simple. Similarly, we call a rational function $r(z)$ simple if all of the zeros of $r(z) - \bar{z}$ are simple. 

If $p(z)$ is a simple polynomial of $\deg p \ge 2$, then there is also an obvious lower bound on the number of solutions by $\deg p$, as a consequence of the ``Argument Principle." If $r(z) = p(z)/q(z)$ is a simple rational function of $\deg r \ge 2$, then there is a lower bound on the number of solutions depending on the degrees of $p$ and $q.$ For example, if $\deg p \le \deg q$, then the lower bound is $\deg r - 1$.

A special subcase of the rational case is obtained by considering rational functions of the form \eqref{LENS EQUATION} with all positive masses $\sigma_i$. Since Rhie's examples were constructed with positive masses, the upper bound of $5n-5$ is still achieved. Meanwhile, Petters \cite{PETTERS2} showed using Morse Theory that if $r(z)$ is a simple rational function of the form \eqref{LENS EQUATION}, then the lower bound on the number of images is $n+1$. 

We will look at each of the three cases mentioned above (polynomial, rational, and physical) from the perspective of ``parameter spaces" and with a motivation of understanding what numbers of solutions between the lower and upper bounds can occur generically. 

For the purposes of this paper, we will parameterize the space of polynomials by their coefficients, letting 
\begin{align*}
	\poly_n=\left\{a_nz^n+a_{n-1}z^{n-1}+\cdots+a_0\ |\ a_n\in\mathbb{C}\setminus\left\{0\right\}, a_j\in\mathbb{C}\ \text{for}\ 0\leq j\leq n-1\right\}\cong\mathbb{C}\setminus\left\{0\right\}\times\mathbb{C}^n.
\end{align*}

\begin{rmk*}
After performing a rotation and/or shift in $z$, one can suppose that $a_n\in\mathbb{R}_{+}$ and $a_{n-1}=0$ without affecting any of the statements below.  However, we consider the present definition of $\poly_n$ more natural.
\end{rmk*} 

\begin{thm}\label{POLYNOMIAL: POSITIVE MEASURES}
Let $S\poly_n (k)$ be the set of simple polynomials of degree $n\geq 2$ with $p(z)-\bar{z}$ having $k$ roots. Then $S\poly_n(k)$ is a non-empty, open subset of $\poly_n$ if and only if $k=n,n+2,\ldots,3n-2$. Furthermore, the set of non-simple polynomials, $N\poly_n$, is the complement of the union 
\begin{equation*}
\bigcup_{k=n, n+2, \ldots, 3n-2} S\poly_n(k)
\end{equation*} 
and is contained in a proper real algebraic (hence measure $0$) subset of $\poly_n$.
\end{thm}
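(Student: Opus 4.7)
For the parity and range of $k$, I would apply the Argument Principle to the harmonic function $f(z) = p(z) - \bar z$ on a large disk. Since $\deg p = n \ge 2$, the polynomial $p(z)$ dominates $\bar z$ on $|z| = R$ for $R$ large, so the winding of $f$ along this circle equals that of $p$, namely $n$. On the other hand, each simple zero $z_0$ of $f$ contributes $\pm 1$ with sign $\mathrm{sgn}(\mathcal{D}(z_0)) = \mathrm{sgn}(|p'(z_0)|^2 - 1)$; so if $N_\pm$ denote the counts of each sign, then $N_+ - N_- = n$ and hence $k = N_+ + N_- \ge n$ with $k \equiv n \pmod 2$. Combined with the Khavinson--\'{S}wi\c{a}tek upper bound $k \le 3n - 2$, this gives necessity: $k \in \{n, n+2, \ldots, 3n-2\}$.

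Openness of $S\poly_n(k)$ follows from the Implicit Function Theorem, applicable at each simple zero $z_i$ of $p(z) - \bar z$ since $\mathcal{D}(z_i) \ne 0$: for any polynomial $q$ close to $p$ in $\poly_n$, the IFT produces a simple zero of $q(z) - \bar z$ near each $z_i$. To rule out additional zeros, I would use (a) a uniform growth estimate $|q(z) - \bar z| \ge c|z|^n - |z| > 0$ for $|z| > R$, valid on a bounded neighborhood of $p$ because $|a_n|$ is bounded away from $0$, to exclude zeros near infinity, and (b) local uniqueness from the IFT to prevent any extra zero of $q(z) - \bar z$ in $|z| \le R$ from accumulating anywhere other than one of the $z_i$ as $q \to p$.

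For non-emptiness, the extremes are explicit. Geyer's theorem gives $S\poly_n(3n-2) \ne \emptyset$. For $k = n$, take $p(z) = z^n + Mz$ with real $M > 1$ large: the rescaling $z = M^{1/(n-1)} w$ converts $p(z) = \bar z$ into $w^n + w = M^{-1}\bar w$, a small perturbation of $w^n + w = 0$ whose $n$ simple zeros (at $w = 0$ and the $(n-1)$-st roots of $-1$) persist with positive Jacobian, while an estimate $|p(z) - \bar z| \ge c|z|^n - |z|$ for large $|z|$ rules out zeros outside a bounded region in $w$. For intermediate $k = n + 2j$ with $1 \le j \le n - 2$, I would connect the Geyer example ($k = 3n - 2$) to $z^n + Mz$ ($k = n$) by a path $p_t$ in $\poly_n$, perturbed into general position relative to the real-algebraic stratification of $N\poly_n$ so that it crosses only the codimension-one stratum, and transversely. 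At each such crossing a saddle-node bifurcation creates or destroys a pair of simple zeros of opposite Jacobian sign, changing $k$ by $\pm 2$; since $k$ starts at $3n-2$ and ends at $n$, it must take every allowed intermediate value along the way. The main obstacle is verifying that the codimension-one stratum of $N\poly_n$ consists generically of Morse-type saddle-node singularities and that a path in general position realizes all such bifurcations; alternatively, one can bypass this by perturbing Geyer's example directly to merge any prescribed number of zero pairs.

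Finally, $N\poly_n$ lies in a proper real algebraic set: $p$ is non-simple iff the system $p(z) - \bar z = 0$ together with $|p'(z)|^2 - 1 = 0$ has a solution $z = x + iy \in \mathbb{C}$, equivalently three real polynomial equations in $(x, y)$ whose coefficients lie in $\mathbb{R}[\Real a_j, \Imag a_j]$. Classical elimination (via resultants, or Tarski--Seidenberg followed by taking the Zariski closure) yields a real polynomial $\Delta$ in the coefficients of $p$ that vanishes whenever this system is solvable; since simple polynomials exist by the previous paragraph, $\Delta \not\equiv 0$, so $N\poly_n \subseteq \{\Delta = 0\}$ is a proper real algebraic subset, and in particular has Lebesgue measure zero.
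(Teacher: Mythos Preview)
Your treatment of the parity/range constraint via the Argument Principle, the upper bound via Khavinson--\'{S}wi\c{a}tek, openness via the Implicit Function Theorem, and the $N\poly_n$ statement via Tarski--Seidenberg plus Zariski closure all match the paper's approach. Your explicit example $p(z)=z^n+Mz$ for $k=n$ is correct and is a nice alternative to the paper's base case.

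The substantive divergence is in the non-emptiness argument for intermediate $k$. The paper does \emph{not} use a path-plus-bifurcation argument; instead it inducts on the degree $n$. Given $p\in S\poly_n(k)$, the paper sets $\widetilde p(z)=p(z)+\epsilon z^{n+1}$ and shows, by carefully controlling the sense-reversing region $\{|\widetilde p'(z)|\le 1\}$ outside a large disk, that $\widetilde k_-=k_-$; since $\widetilde k_+-\widetilde k_-=n+1$ by the Argument Principle, one gets $\widetilde k=k+1$. This produces $S\poly_{n+1}(k+1)\ne\emptyset$ for $k+1=n+1,\ldots,3n-1$, and Geyer's example fills in the top value $3(n+1)-2$. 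The induction is entirely elementary: no stratification, no normal forms.

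Your path argument, by contrast, rests on the claim that a generic one-parameter family in $\poly_n$ crosses $N\poly_n$ only along a codimension-one stratum where the bifurcation is a fold (saddle-node). You flag this yourself as ``the main obstacle,'' and it is a genuine gap: establishing that the top stratum of $N\poly_n$ consists of fold singularities requires a normal-form computation (e.g.\ checking that generically the degenerate zero has $|p'(z_0)|=1$ with a nonvanishing second-order term), and one must also verify transversality of a generic path to this stratum. This is doable via standard singularity theory for maps $\mathbb{R}^2\to\mathbb{R}^2$, but it is real work that you have not supplied, and the fallback you mention (``perturbing Geyer's example directly to merge any prescribed number of zero pairs'') is not a proof either. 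The paper's inductive construction sidesteps all of this.

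One smaller issue: in your last paragraph you conclude $\Delta\not\equiv 0$ from the mere \emph{existence} of a simple polynomial. That does not follow, since $\Delta$ is only guaranteed to vanish on a set \emph{containing} $N\poly_n$, not exactly on it. What you need (and what the paper uses, citing Khavinson--\'{S}wi\c{a}tek) is that simple polynomials are \emph{dense}, so $N\poly_n$ has empty interior and hence semialgebraic dimension below $2n+2$; then its Zariski closure is proper.
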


\begin{figure}
  \includegraphics[height=5.5cm]{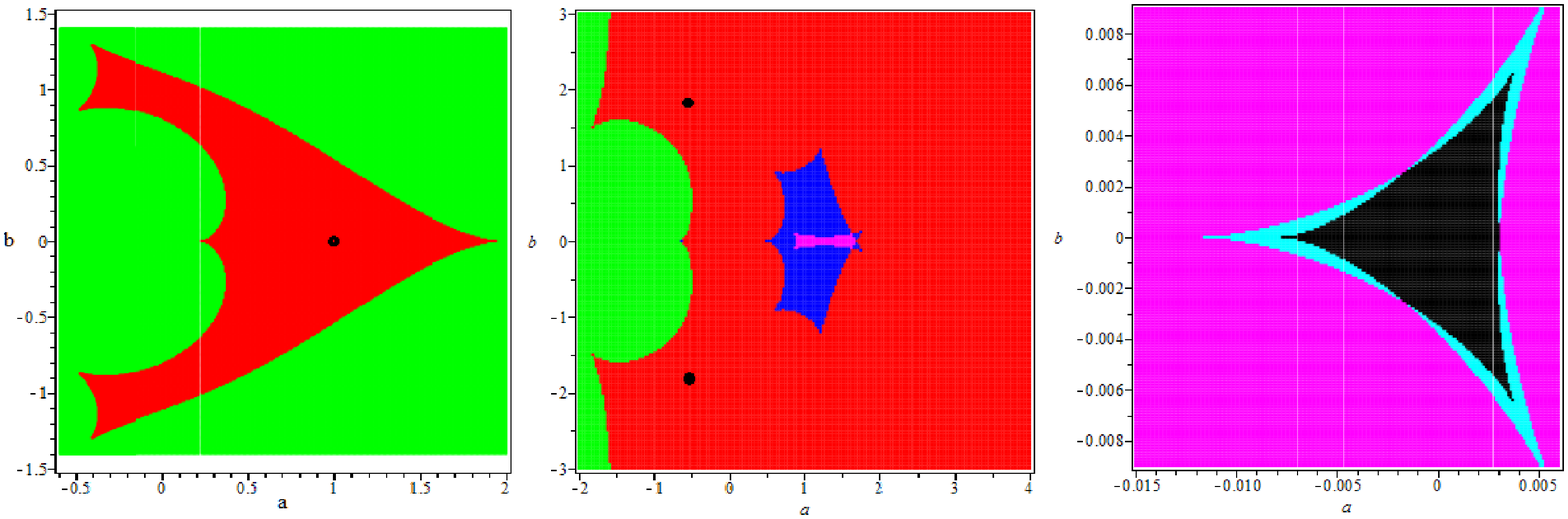}
	\caption[.]{Parameter space pictures for slices of $\masses_2$, $\masses_3$, and $\masses_4$, respectively. The fixed masses are denoted with large, black dots. 
	
	Left: There is one fixed mass located at $\left(1,0\right)$.  Placing a second, equal mass in the green region will produce 3 images and placing in the red region will produce 5 images. 
	
	Center: There are two equal, fixed masses located at $\left(-\frac{1}{2},\frac{\sqrt{3}}{2}\right)$ and $\left(-\frac{1}{2},-\frac{\sqrt{3}}{2}\right)$.  Placing a third, equal mass in the green region will produce 4 images, red- 6 images, blue- 8 images, and yellow- 10 images. 
	
	Right: There are three equal, fixed masses located at $\left(-\frac{1}{2},\frac{\sqrt{3}}{2}\right), \left(-\frac{1}{2},-\frac{\sqrt{3}}{2}\right),$ and $\left(1,0\right)$. Placing a fourth, equal mass in the magenta region will produce 11 images, light blue- 13 images, and black- 15 images.}
\label{Pictures4-2intro}
\end{figure}

We parameterize the space of rational functions of degree $n$ by their coefficients up to scalings, with the condition that $p$ and $q$ are relatively prime. More specifically, let
\begin{equation}\label{RATIONAL FUNCTION}
r(z) = \frac{a_n z^n + \ldots + a_0}{b_n z^n + \ldots + b_0}
\end{equation}
for $(a_n, \ldots, a_0, b_n, \ldots, b_0) \in \mathbb{C}^{2n+2}$.  The space $\rat_n$ of rational functions of degree $n$ can be parameterized by $2n+2$-tuples of complex numbers $(a_n, \ldots, a_0, b_n, \ldots, b_0)$, considered up to non-zero complex scaling with two restrictions: 

\begin{enumerate}
\item $a_n \neq 0$ or $b_n \neq 0$ and
\item The resultant of $a_n z^n + \ldots + a_0$ and $b_n z^n + \ldots + b_0$ is not equal to $0$.
\end{enumerate}
As such, it is an open subset of $\mathbb{CP}^{2n+1}$.

\begin{thm}\label{RATIONAL: POSITIVE MEASURES}
Let $S\rat_n(k)$ be the set of simple rational functions $r(z)$ of degree $n\geq 2$ such that $f(z) = r(z)-\bar{z}$ has $k$ roots. Then for all $n \ge 2$, $S\rat_n(k)$ is a non-empty, open subset of $\rat_n$ if and only if  $k = n-1, n+1, \ldots, 5n-5$. Moreover, the complement of 

\begin{equation}\label{RATIONAL COMPLEMENT}
\bigcup_{k=n-1, n+1, \ldots, 5n-5} S\rat_n(k),
\end{equation} 
is contained in a proper real algebraic (hence measure $0$) subset of $\rat_n$.
\end{thm}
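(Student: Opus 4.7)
The plan is to follow the same four-step structure as for Theorem~\ref{POLYNOMIAL: POSITIVE MEASURES}, adapted to the rational setting. The arithmetic progression $\{n-1, n+1, \ldots, 5n-5\}$ arises from three ingredients: Khavinson--Neumann's upper bound $k \le 5n-5$, the Argument Principle, and the fact that $a_n$ and $b_n$ are both generically nonzero in $\rat_n$. On a large circle $|z| = R$ the function $p(z) - \bar z q(z)$ is dominated by $-b_n \bar z z^n$, whose winding number is $n-1$; so for any simple $r$ with $a_n, b_n \neq 0$ the signed count $N_+ - N_-$ equals $n-1$, forcing $k \ge n-1$ and $k \equiv n-1 \pmod{2}$.

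Openness of $S\rat_n(k)$ will follow from the implicit function theorem (at a simple zero $\mathcal{D}(z_0) \neq 0$, so zeros persist smoothly under perturbation), combined with the Khavinson--Neumann bound (preventing new zeros from appearing). The set of non-simple rational functions is described by the existence of $z_0 = x_0 + i y_0$ satisfying $r(z_0) - \bar z_0 = 0$ and $|r'(z_0)|^2 - 1 = 0$; after clearing denominators this is a real polynomial system in $(x_0, y_0)$ and in the real and imaginary parts of the coefficients of $r$. Eliminating $x_0, y_0$ by resultants, or by the Tarski--Seidenberg theorem, yields a real algebraic subset of $\rat_n$ containing all non-simple rationals, and it is a \emph{proper} subset because Rhie's construction \cite{RHIE} exhibits a simple $r \in \rat_n$ with $5n-5$ roots.

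The main obstacle is to produce, for every allowed $k$, a simple $r \in \rat_n$ whose equation $r(z) = \bar z$ has exactly $k$ roots. The plan is a deformation argument: starting from Rhie's example achieving $k = 5n-5$, construct a real-analytic path $r_t$ in $\rat_n$ terminating at an explicit simple rational function with the minimum $n-1$ roots (which can be built as a small perturbation of a generic lens-type function with $\deg p < \deg q$). Generically, such a path is transverse to the discriminant locus of non-simple rationals, meets it only at ``fold'' points where exactly two zeros of opposite Jacobian sign collide and annihilate, and therefore passes through configurations with every intermediate value of $k$ of the correct parity. The main technical difficulty is to guarantee this transversality and avoid the higher-codimension strata of the discriminant where more than two zeros could coincide simultaneously; this is to be handled by perturbing the path within $\rat_n$, using the semi-algebraic description of the discriminant obtained in the previous step.
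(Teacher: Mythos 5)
The bookkeeping in your first steps is essentially right (your computation of $k_+-k_-=n-1$ via the winding of $p(z)-\bar z q(z)$ on a large circle is a legitimate variant of Lemma~\ref{S.P-S.R}, since $p-\bar z q=q\cdot(r-\bar z)$, $q$ contributes winding $n$ on $S_R$ and nothing locally at the zeros), but the heart of the theorem is the realizability of every admissible $k$, and there your deformation argument has a genuine gap. You assert that a generic path from Rhie's example to a minimal example meets the discriminant only transversally, at ``fold'' points where exactly two zeros of opposite orientation annihilate. That is precisely the statement that needs proof, and nothing in your outline supplies it: you would have to show that the strata corresponding to worse degeneracies (three or more colliding zeros, a zero colliding with a pole of $r$, zeros escaping to infinity as $b_n\to 0$, singular zeros where $f$ is neither sense-preserving nor sense-reversing) all have codimension at least $2$ in $\rat_n$, and that at a generic point of the top stratum the local model really is a fold producing a $\pm 2$ jump. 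The Tarski--Seidenberg/dimension machinery only gives that the non-simple locus is semialgebraic of codimension $\ge 1$; it says nothing about its singular strata or the local bifurcation type, so the discrete intermediate-value argument along the path is not justified. Two smaller gaps: properness of an algebraic set containing $N\rat_n$ does not follow from exhibiting one simple $r$ --- a semialgebraic set missing a point can still have nonempty interior and hence full Zariski closure; you need the \emph{density} of simple rational functions (the lemma on p.~1081 of \cite{KHAVINSON-NEUMANN}, as used in Proposition~\ref{NONSIMPLE}). And the Khavinson--Neumann upper bound does not prevent new zeros from appearing under perturbation unless $k=5n-5$ already; openness needs the compactness estimate $|f|>A$ off small disks around the zeros, plus control near the poles and near infinity.

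The paper avoids all of this with a constructive induction. Given a simple $r\in S\rat_n(k)$, set $\widetilde r=r+\epsilon/(z-\widetilde z)$: for $\epsilon$ small the $k$ old zeros persist with their orientations, and every \emph{new} zero is forced to be sense-preserving, because at a new zero $|\epsilon/(z-\widetilde z)|=|\bar z-r(z)|>A$ gives $|\widetilde r^{\,\prime}(z)|>A^2/\epsilon-C>1$; since Lemma~\ref{S.P-S.R} forces $\widetilde k_+-\widetilde k_-=n$, exactly one new (sense-preserving) zero appears, so $S\rat_{n+1}(k+1)\neq\emptyset$. This realizes $k=n,n+2,\ldots,5n-4$ in degree $n+1$, and the two missing top values $5n-2$ and $5n$ come from the explicit constructions of Propositions~\ref{5n-5 BOUND} and~\ref{RATIONAL: 5n-7}. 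To salvage your route you would need to carry out the stratification and transversality analysis of the discriminant in detail, or replace the abstract path by explicit one-parameter families in which each collision is exhibited by hand --- which is, in effect, what the paper's induction does.
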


\begin{rmk*}
In the rational case, the complement of the union given by Equation \eqref{RATIONAL COMPLEMENT} consists of both the hyperplane $b_n = 0$ and the set of non-simple rational functions, $N\rat_n$.
\end{rmk*}

In the physical case, we parameterize the space of all configurations of $n$ positive masses in $\mathbb{C}$ by $n$-tuples
\begin{equation*}
\masses_n = \{ ((z_1, \sigma_1),\ldots, (z_n, \sigma_n)) \in (\mathbb{C}\times\mathbb{R}_{+})^n \mid z_i \neq z_j \textrm{ if } i \neq j\}.
\end{equation*}
Note that our parameterization represents the masses as ``marked," i.e. if two masses have the same mass and are interchanged, then the corresponding point in $\masses_n$ is different even though the physical configuration is the same. 

\begin{thm}\label{RATIONAL: PHYSICAL}
Let $S\masses_n (k)$ be the set of simple, ``positive massed" rational functions $r(z) \in \masses_n$ having degree $n \ge 2$ that yield $k$ solutions to Equation~\eqref{LENS EQUATION}. Then for all $n \ge 2$, $S\masses_n (k)$ is a non-empty, open subset of $\masses_n$ if and only if $k = n+1, n+3, \ldots, 5n-5$. Moreover, the set of non-simple positive massed rational functions, $N\masses_n$, is the complement of the union
\begin{equation*}
\bigcup_{k = n+1, n+3, \ldots, 5n-5} S\masses_n (k)
\end{equation*}
and is contained within a proper real algebraic (hence measure $0$) subset of $\masses_n$. 
\end{thm}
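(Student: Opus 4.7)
The plan is to establish three claims: (a) $S\masses_n(k)$ is open in $\masses_n$ for every $k$, (b) the set $N\masses_n$ of non-simple configurations is contained in a proper real algebraic subvariety of $\masses_n$, and (c) each admissible value $k \in \{n+1, n+3, \ldots, 5n-5\}$ is realized. Items (a) and (b) will parallel the corresponding steps for Theorems~\ref{POLYNOMIAL: POSITIVE MEASURES} and~\ref{RATIONAL: POSITIVE MEASURES}, while (c) is the substantive content.

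For (a), I would fix $r \in S\masses_n(k)$ with simple zeros $z_1,\ldots,z_k$ of $r(z) - \bar z$. Since every $\sigma_j>0$, one has $r(z) - \bar z = -\bar z + O(1/|z|)$ as $|z|\to\infty$, so all zeros lie in some compact disk $K$. At each $z_j$ the condition $\mathcal{D}(z_j)\neq 0$ allows the Implicit Function Theorem to be applied to the real and imaginary parts of $r(z)-\bar z$, giving simple persistence under perturbation of $(z_j,\sigma_j)$; a compactness argument on $K$ minus small disks around the $z_j$ rules out the birth of new zeros. Hence $S\masses_n(k)$ is open. For (b), the non-simple condition is the existence of $z = x+iy$ with $r(z)-\bar z = 0$ and $\mathcal{D}(z) = 0$. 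After clearing denominators these are real polynomial equations in $(x,y)$ and in the real parameters coordinatizing $\masses_n$. Eliminating $(x,y)$ via Tarski--Seidenberg and then taking the real algebraic closure of the projected image produces a real algebraic subset of $\masses_n$ containing $N\masses_n$; it is proper because Rhie's construction provides a point of $S\masses_n(5n-5) \subset \masses_n\sm N\masses_n$.

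For (c), Petters' extension of Burke's theorem, applied to $g=n$ point singularities, forces the parity $k \equiv n-1 \pmod 2$, singling out exactly the stated arithmetic progression. The upper extreme $k = 5n-5$ is realized by Rhie's configuration, and the lower extreme $k = n+1$ is Petters' bound, which is known to be attained. For the intermediate values, $\masses_n$ is path-connected, so I would pick a smooth path $\gamma$ between extreme configurations. Using (b) together with a Sard-type transversality argument, $\gamma$ may be perturbed so that it crosses $N\masses_n$ only at finitely many times, and at each such time the degeneracy is generic: a single pair of zeros of $r(z)-\bar z$ collides at a fold point while all other zeros remain simple. The local normal form at a Whitney fold of the real map $(x,y)\mapsto(\Real(r(z)-\bar z),\Imag(r(z)-\bar z))$ then gives a change of exactly $\pm 2$ in the count. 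Since the total count descends from $5n-5$ to $n+1$ by $\pm 2$ jumps consistent with the parity, every value in $\{n+1, n+3, \ldots, 5n-5\}$ is achieved along $\gamma$, and each is stable under further perturbation by (a).

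The main obstacle will be the transversality and local normal-form step: one must show that along a generic path, (i) crossings of $N\masses_n$ are isolated, (ii) only one pair of zeros degenerates at each crossing, and (iii) the singularity there is a Whitney fold rather than a higher-codimension degeneracy (cusp, swallowtail, or simultaneous degeneracy at two distinct zeros). Each of these will be enforced by requiring $\gamma$ to avoid a strictly lower-dimensional real algebraic stratum of $N\masses_n$, whose real-algebraic structure is exactly what (b) supplies. A subsidiary point is verifying that the two extreme constructions (Rhie's and Petters') can be connected by a path lying in $\masses_n$ rather than escaping into a configuration where two masses collide; this follows from the convexity of $\mathbb{R}_+^n$ and standard path-connectedness of the configuration space of $n$ marked points in $\mathbb{C}$.
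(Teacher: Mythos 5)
Your parts (a) and (b) follow the paper, but your part (c) takes a genuinely different route --- a one-parameter deformation from Rhie's configuration to a minimal one, with the count changing by $\pm 2$ at generic fold crossings of $N\masses_n$ --- whereas the paper proves existence by induction on $n$: starting from $S\masses_2(3)$ and $S\masses_2(5)$, it adds a small mass $\epsilon/(z-\widetilde z)$ and uses the Argument Principle (Lemma~\ref{S.P-S.R}) to show that the $k$ old zeros persist with their orientations and exactly one new sense-preserving zero appears, so $S\masses_{n+1}(k+1)\neq\emptyset$; the top two values $5n-2$ and $5n$ are then supplied separately by Propositions~\ref{5n-5 BOUND} and~\ref{RATIONAL: 5n-7}. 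The paper's route is entirely elementary and constructive, and in particular it \emph{produces} the lower extreme $k=n+1$ (two masses give three images, then induct), which you instead assert is ``known to be attained'' without a construction or citation.

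The crux of your approach --- the transversality and Whitney-fold normal form step --- is a genuine gap, and you have correctly identified it as the main obstacle without closing it. To run the discrete intermediate value argument you need every crossing of $N\masses_n$ along a generic path to change the count by exactly $\pm 2$; this requires showing that the locus of non-fold degeneracies (cusps, higher-order zeros, simultaneous degeneracy at two distinct zeros) is a semialgebraic subset of codimension at least $2$ in $\masses_n$. Nothing in Proposition~\ref{NONSIMPLE}, nor in your Tarski--Seidenberg containment, gives this: that argument only bounds the dimension of $N\masses_n$ itself, not of its substrata. If a bad stratum were of codimension $1$, a crossing could change the count by $4$ or more and skip a value, so the claim is essential and unproved. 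There is also a flaw in your part (b): exhibiting a single point of $S\masses_n(5n-5)\subset \masses_n\sm N\masses_n$ does not show that $\zar{N\masses_n}$ is a \emph{proper} algebraic subset --- the complement of a point in $\mathbb{R}^m$ is semialgebraic with Zariski closure all of $\mathbb{R}^m$. One needs $\dim{N\masses_n}<\dim{\masses_n}$, which the paper obtains from the \emph{density} of the simple configurations (the Khavinson--Neumann lemma combined with the observation that $r(z)+c$ is again a positive-mass configuration) together with the Dimension Theorem.
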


Thus, we have completed the solution to the problem of how many images of a star can be created by a gravitational lens consisting of $n$ point masses.

\subsection{Structure of the Paper}
In Section~\ref{SEMIALGEBRAIC}, we use basic properties of real algebraic and semialgebraic sets to show that nonsimple polynomials and rational functions lie within proper real algebraic subsets of $\poly_n$, $\rat_n$, and $\masses_n$.  The main tool used in the remainder of the paper is the extension of the ``Argument Principle" to harmonic functions $f: \mathbb{C} \to \mathbb{C}$ obtained in \cite{ARGUMENT PRINCIPLE}, which is stated precisely in Section~\ref{ARGUMENT PRINCIPLE}.  We then prove Theorem \ref{POLYNOMIAL: POSITIVE MEASURES} in Section \ref{POLYNOMIAL CASE}. In Section \ref{RATIONAL CASE}, we present Rhie's examples \cite{RHIE} and then prove Theorem \ref{RATIONAL: POSITIVE MEASURES}.  We consider the physical case in Section \ref{PHYSICAL CASE}, presenting a simplified exposition of Petters' lower bound and proving Theorem \ref{RATIONAL: PHYSICAL}.

\section{Nonsimple Harmonic Functions and Semialgebraic Geometry}\label{SEMIALGEBRAIC}

Recall that $N\poly_n$, $N\rat_n$, and $N\masses_n$ denote the sets of non-simple polynomials, rational functions, and ``positive massed" rational functions, respectively.

\begin{prop}\label{NONSIMPLE}
For any $n>1$,

\begin{itemize}
	\item $N\poly_n$ is contained in a proper real algebraic subset of $\poly_n$,
	\item $N\rat_n$ is contained in a proper real algebraic subset of $\rat_n$, and
	\item $N\masses_n$ is contained in a proper real algebraic subset of $\masses_n$.
\end{itemize}
\end{prop}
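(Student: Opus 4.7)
The unifying approach is: (i) describe non-simplicity at a putative zero $(x,y)$ by real polynomial equations in the parameters and in $(x,y)$; (ii) eliminate $(x,y)$ by the Tarski-Seidenberg theorem; (iii) take the Zariski closure of the resulting semialgebraic set; (iv) verify propriety via an explicit simple example.

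For step (i), the key computation is the Wirtinger-derivative identity
\begin{equation*}
\mathcal{D}(z_0) \;=\; \bigl|f_z(z_0)\bigr|^2 - \bigl|f_{\bar z}(z_0)\bigr|^2
\end{equation*}
applied to $f(z) = p(z) - \bar z$. Since $p$ is holomorphic, $f_{\bar z} = -1$ and $f_z = p'(z)$, so $\mathcal{D}(z_0) = |p'(z_0)|^2 - 1$. Hence $p \in N\poly_n$ iff the three real polynomial equations $\Real\bigl(p(x+iy) - (x-iy)\bigr) = 0$, $\Imag\bigl(p(x+iy) - (x-iy)\bigr) = 0$, and $|p'(x+iy)|^2 - 1 = 0$ have a common real solution $(x,y)$; the coefficients of these equations are real polynomials in $(\Real a_j, \Imag a_j)$. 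The same reduction works in the rational case via $\mathcal{D}(z_0) = |r'(z_0)|^2 - 1$ once the denominators $|q(z)|^{2k}$ are cleared, and in the physical case after multiplying through by $\prod_j |z - z_j|^4$ to clear the poles of the lens equation \eqref{LENS EQUATION}.

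For steps (ii)--(iii), let $V \subset \poly_n \times \mathbb{R}^2$ be the real algebraic incidence variety defined by the three polynomial equations and let $\pi$ be projection to the first factor. Then $N\poly_n = \pi(V)$, which is semialgebraic by Tarski-Seidenberg; its Zariski closure in $\poly_n$ is a real algebraic subset containing $N\poly_n$, proper provided $N\poly_n \neq \poly_n$. The same argument runs verbatim for $N\rat_n$ on each standard affine chart of the ambient $\mathbb{CP}^{2n+1}$, and for $N\masses_n$ with parameters $(\Real z_j, \Imag z_j, \sigma_j)$.

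Step (iv) requires producing an explicit simple element of each parameter space. For $\poly_n$ and $\rat_n$, the choice $p(z) = z^n$ (viewed in the rational case as $z^n/1$) is simple: $z^n = \bar z$ forces $z = 0$ or $|z| = 1$, and $|p'(z)| \in \{0, n\}$ on this set, never $1$ when $n \geq 2$. The physical case is more delicate, because the simple example must be of positive-massed form. Here I would invoke Rhie's construction \cite{RHIE}, which for every $n \geq 2$ produces a positive-massed configuration realizing the sharp upper bound $5n-5$ with simple zeros, and so witnesses $N\masses_n \subsetneq \masses_n$. Producing or invoking this positive-massed simple configuration is the only non-formal step, and hence the principal obstacle; the rest is a direct application of the Tarski-Seidenberg/Zariski-closure machinery to the three polynomial systems.
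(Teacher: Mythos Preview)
Your steps (i)--(iii) match the paper's approach exactly: form the incidence variety, project by Tarski--Seidenberg, and pass to the Zariski closure via the Dimension Theorem. The gap is in step (iv). You write that the Zariski closure of $N\poly_n$ is ``proper provided $N\poly_n \neq \poly_n$,'' but this implication is false for semialgebraic sets: a proper semialgebraic subset can be Zariski dense (e.g.\ a half-space in $\mathbb{R}^m$). What you actually need is that $N\poly_n$ has \emph{empty interior}, so that its semialgebraic dimension is strictly below $\dim\poly_n$; only then does the Dimension Theorem force $\zar{N\poly_n}$ to be a proper algebraic set. Exhibiting a single simple polynomial such as $z^n$ establishes $N\poly_n\neq\poly_n$ but says nothing about its interior.

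The paper handles exactly this point by citing the density of simple elements: \cite[Lemma~5]{KHAVINSON-SWIATEK} for $\poly_n$ and the lemma on p.~1081 of \cite{KHAVINSON-NEUMANN} for $\rat_n$ (and, via the observation that $r(z)\mapsto r(z)+c$ stays in $\masses_n$, for $\masses_n$ as well). These density statements are precisely what gives $N\poly_n$, $N\rat_n$, $N\masses_n$ empty interior. So to repair your argument you must either invoke those density lemmas or reprove them; a single example (or Rhie's configuration in the physical case) is not enough.
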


In order to prove Proposition~\ref{NONSIMPLE} we will need to take projections of real algebraic sets.  However, such projections often fail to be real algebraic sets---notable examples include the projection of $x=y^2$ or $xy=1$ on to the $x$-axis.  Thus, we will need to work in the realm of semialgebraic geometry, see~\cite{BOCHNAK} and ~\cite{COSTE}.

\begin{defn}
A \emph{semialgebraic} subset of $\mathbb{R}^n$ is a finite union of sets given by finitely many polynomial equations and inequalities with real coefficients.
\end{defn}

We will need the following key properties of semialgebraic sets:

\begin{tar*}\label{TARSKI}
Let $A$ be a semialgebraic subset of $\mathbb{R}^{n+1}$ and $\pi:\mathbb{R}^{n+1}\rightarrow\mathbb{R}^{n}$, the projection on the first $n$ coordinates.  Then $\pi(A)$ is a semialgebraic subset of $\mathbb{R}^{n}$.
\end{tar*}
\noindent See ~\cite[Thm. 2.2.1]{BOCHNAK}.

\begin{cad*}\label{CAD}
Any semialgebraic set can be decomposed into finitely many sets, each homeomorphic to $[0,1]^{d_i}$ for some $d_i$.
\end{cad*}
\noindent See ~\cite[Thm. 2.3.6]{BOCHNAK}.

\begin{defn}\label{CAD DIM}
The dimension of a semialgebraic set $A$ is the maximum of the dimensions $d_i$ from some cylindrical algebraic decomposition of $A$.  
\end{defn}

Note that the $\dim{A}$ is well-defined independent of which cylindrical algebraic decomposition is chosen.  Recall that for any $B\subset\mathbb{R}^{n}$, the Zariski closure of $B$, denoted $\zar{B}$, is the smallest real algebraic set containing $B$.

\begin{dt*}\label{SEMIALGEBRAIC DIM}For any semialgebraic set $A\subset\mathbb{R}^{n}$, $\dim{A}$ coincides with $\dim{\zar{A}}$.
\end{dt*}
\noindent
See ~\cite[Section 2.8]{BOCHNAK}.  

\begin{rmk*}The definition of dimension for a semialgebraic set $A$ is given in a different, but equivalent way in ~\cite[Section 2.8]{BOCHNAK}.  They define $\dim{A}=\dim{\zar{A}}$ and then prove that this definition coincides with the maximal dimension of any cell from the Cylindrical Algebraic Decomposition.
\end{rmk*}

\begin{proof}[Proof of Proposition~\ref{NONSIMPLE}]
Consider the real algebraic set
\begin{align*}
        V=\left\{(p,z) \in\poly_n\times\mathbb{C}\ | \ p(z)=\bar{z}, \left|p^{\prime}(z)\right|^2=1\right\}.
\end{align*}
Note that since $V$ is a real algebraic set, it is also a semialgebraic set.

If we let $\pi:\poly_n\times\mathbb{C}\rightarrow\poly_n$ be the projection onto the first coordinate.  Since $N\poly_n=\pi(
V)$, the Tarski-Seidenberg Theorem gives that $N\poly_n$ is a semialgebraic set.

We know from \cite[Lemma 5]{KHAVINSON-SWIATEK} that $S\poly_n$ is dense in $\poly_n$, hence $N\poly_n$ cannot contain a set 
homeomorphic to $[0,1]^{2n+2}$.  Thus, by Definition~\ref{CAD DIM} and the Dimension Theorem, we have that          
\begin{align*}
        \dim{\zar{N\poly_n}}=\dim{N\poly_n}\leq 2n+1.
\end{align*}
In particular, $N\poly_n$ is a subset of $\zar{N\poly_n}$, which is a proper real algebraic subset of $\poly_n$.

An identical proof shows that $N\rat_n$ and $N\masses_n$ are semialgebraic subsets of $\rat_n$ and $\masses_n$, respectively.  By the lemma from p. 1081 of ~\cite{KHAVINSON-NEUMANN}, $S\rat_n$ is a dense subset of $\rat_n$, implying that $\dim{\zar{N\rat_n}}<\dim{\rat_n}$.

Moreover, given any $r(z)\in \masses_n$ and any $c\in\mathbb{C}$, $r(z)+c$ corresponds to a shift made to the locations of each of the masses $z_i$ and therefore, $r(z)+c\in \masses_n$ as well.  Thus, the lemma from ~\cite[p. 1081]{KHAVINSON-NEUMANN} also shows that $S\masses_n$ is a dense subset of $\masses_n$, implying that $\dim{\zar{N\masses_n}}<\dim{\masses_n}$.
\end{proof}

\section{Harmonic Functions and the Argument Principle}\label{ARGUMENT PRINCIPLE}

We will need to use an extension of the classical Argument Principle to harmonic functions given by \cite{ARGUMENT PRINCIPLE} and \cite{ARGUMENT PRINCIPLE: POLES}. For a harmonic function $f(x,y) = u(x,y) + iv(x,y)$ defined on an open simply connected set $U\subset \mathbb{C}$, we can find analytic functions $h$ and $g,$ unique up to additive constants, such that $f = h + \bar{g}$. Let us consider the power-series expansions of $h$ and $g$ at $z_0$:
\begin{equation*}
h(z) = a_0 + \sum_{k=1}^{\infty} a_k(z-z_0)^k, \qquad g(z) = b_0 + \sum_{k=1}^{\infty} b_k(z-z_0)^k.
\end{equation*} 
Let $m \geq 1$ be the first index for which either $a_m$ or $b_m$ is non-zero. We say that $f$ is \emph{sense-preserving} (s.p) at $z_0$ if $a_m \neq 0$ and $|b_m/a_m| < 1$, and we say that $f$ is \emph{sense-reversing} (s.r) at $z_0$ if $b_m \neq 0$ and $|a_m/b_m| < 1$. Note that if the Jacobian $\mathcal{D}_f (z_0) = |a_1|^2 - |b_1|^2 \neq 0$, then this definition coincides with the classical one. If $h(z_0) = 0$, we define the order of $z_0$ as $+m$ if $f$ is s.p at $z_0$ and $-m$ if $f$ is s.r at $z_0$. If $f$ is neither s.p or s.r at $z_0$, then $z_0$ is called a 		\emph{singular point} and the order is undefined. 

We will also need to consider harmonic functions with poles, i.e. functions
\begin{align*}
	f: \mathbb{C} \setminus \{z_1, \ldots, z_k\} \to \mathbb{C},
\end{align*}
which are harmonic and satisfy $\lim_{z\to z_j} |f(z)| = \infty$. The points $z_1, \ldots, z_k$ are called \emph{poles} of $f$ and we will write $f(z_j) = \infty$. 

Take an oriented closed contour $\Gamma$ such that $f(x,y) \notin \{0, \infty\}$ for $(x,y) \in \Gamma$. Consider a ``normal" coordinate $s: [0,1] \to \Gamma$, with $s(0) = s(1)$, and write $f(s(t)) =  r(t)e^{i\theta(t)}$ in such a way that $\theta$ varies continuously over $[0,1]$. Then we say that $\Delta_{\Gamma} \arg(f) = \theta(1) - \theta(0) = 2\pi \cdot \omega_{\Gamma}$, where $\omega_{\Gamma}$ is an integer. We call $\omega_{\Gamma}$ the ``winding number" of $f$ over $\Gamma$. 

A simple calculation shows that the order of a zero is equal to $\omega_{\gamma}$, where $\gamma$ is a sufficiently small geometric circle centered at the zero and positively oriented. The order of a pole is defined in an ad-hoc way to be $-\omega_{\gamma}$, where $\gamma$ is defined similarly.

The result from \cite{ARGUMENT PRINCIPLE: POLES} is:

\begin{arg*}\label{ARG PRINCIPLE}
Let $f$ be harmonic, except at a finite number of poles, in a simply connected domain $D \subset \mathbb{C}$. Let $C$ be an oriented closed contour in $D$ not passing through a pole or a zero, enclosing a region (taken with orientation) $\Omega \subset D$. Suppose that $f$ has no singular zeros in $D,$ and let $N$ be the sum of the orders of the zeros of $f$ in $\Omega$. Let $M$ be the sum of the orders of the poles of $f$ in $\Omega$. Then 
\begin{equation*}
\Delta_C \arg f(z) = 2\pi N - 2\pi M.
\end{equation*}
\end{arg*}

\section{Polynomial Case}\label{POLYNOMIAL CASE}
In this section, we prove Theorem \ref{POLYNOMIAL: POSITIVE MEASURES}.  An immediate application of the Argument Principle gives the following result (briefly mentioned in \cite{KHAVINSON-SWIATEK}).
 
\begin{prop}\label{APPLICATION}
Let $p$ be a simple polynomial and let $k_+$ and $k_-$ denote the number of sense-preserving and sense-reversing zeros of $f(z) = p(z) - \bar{z}$, respectively. If $n = \deg p > 1$, then $k_+ - k_- = n$.  In particular, the total number of zeros satisfies $k=k_{+}+k_{-} \equiv n \bmod 2$.
\end{prop}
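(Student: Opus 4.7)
The plan is to apply the harmonic Argument Principle directly to $f(z) = p(z) - \bar{z}$ on a large circle $C_R = \{|z| = R\}$, with $R$ chosen so big that all zeros of $f$ lie inside and $f$ does not vanish on $C_R$. Since $p$ is a polynomial, $f$ has no poles, so $M = 0$ and the principle reduces to $\Delta_{C_R} \arg f(z) = 2\pi N$, where $N$ is the sum of the orders of the zeros of $f$ enclosed by $C_R$.

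First I would identify $N$ with $k_+ - k_-$. Writing $f = h + \bar{g}$ with $h(z) = p(z)$ and $g(z) = z$, at any zero $z_0$ of $f$ we have $a_1 = p'(z_0)$ and $b_1 = 1$. Because $p$ is simple, the Jacobian $\mathcal{D}_f(z_0) = |a_1|^2 - |b_1|^2$ is nonzero, so in particular $a_1$ and $b_1$ are not both zero and the index $m$ from the excerpt equals $1$. Thus every zero is nonsingular of order $\pm 1$, with $+1$ corresponding to s.p.\ zeros and $-1$ to s.r.\ zeros. Summing gives $N = k_+ - k_-$.

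Next I would compute $\Delta_{C_R} \arg f(z)$. For $|z| = R$ we have
\begin{equation*}
f(z) = a_n z^n \left(1 + \frac{a_{n-1}}{a_n z} + \cdots + \frac{a_0}{a_n z^n} - \frac{\bar{z}}{a_n z^n}\right),
\end{equation*}
and since $n \geq 2$, the parenthesized factor tends to $1$ as $R \to \infty$. Choose $R$ large enough that this factor stays in the right half plane $\{\Real w > 0\}$ on $C_R$, which simultaneously prevents $f$ from vanishing on $C_R$ and forces its argument change along $C_R$ to agree with that of $a_n z^n$. Therefore $\Delta_{C_R} \arg f(z) = \Delta_{C_R} \arg(a_n z^n) = 2\pi n$.

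Combining the two pieces gives $2\pi(k_+ - k_-) = 2\pi n$, i.e.\ $k_+ - k_- = n$. The congruence follows immediately: $k - n = (k_+ + k_-) - (k_+ - k_-) = 2k_-$, which is even. I do not anticipate a serious obstacle here; the only point requiring a little care is the standard tail estimate justifying that the winding number on the large circle is controlled by the leading term $a_n z^n$, and this is routine because the additional $-\bar{z}$ contributes only an $O(R)$ perturbation to an $O(R^n)$ main term with $n \geq 2$.
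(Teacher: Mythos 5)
Your proof is correct and follows the same route as the paper: apply the harmonic Argument Principle on a large circle where the leading term $a_nz^n$ dominates, so the winding number is $n$, and identify the sum of the orders of the (simple, hence order $\pm 1$) zeros with $k_+-k_-$. The paper's version is just a two-line sketch of exactly this argument; your write-up supplies the routine details it omits.
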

\begin{proof}
Let $S_{R}$ be a circle sufficiently large and centered at the origin so that $S_{R}$ contains all zeros of $f$ and so that $f$ applied to $S_{R}$ is dominated by the $n$-th degree term.  Then, by the Argument Principle, $k_+ - k_- = \omega_{S_{R}} = n$.
\end{proof}

\begin{proof}[Proof of Theorem \ref{POLYNOMIAL: POSITIVE MEASURES}]
We have already shown in Proposition~\ref{NONSIMPLE} that $N\poly_n$ is contained in a proper real algebraic subset of $\poly_n$ so we will focus our attention on simple polynomials. From Proposition~\ref{APPLICATION}, we have that the number of zeros $k=k_+ +k_-\geq n$ and that $k$ is congruent to $n\bmod 2$.  From \cite{KHAVINSON-SWIATEK}, we have that $k\leq 3n-2$.  Moreover by the Implicit Function Theorem, the sets $S\poly_n(k)$ are open for any $k$.  Thus, it suffices to show that $S\poly_n(k)$ is non-empty for $k=n,n+2,\ldots,3n-2$.  We will prove this by induction on $n\geq 2$. For $n = 2$, $z^2+1$ is an element of $S\poly_2(2)$ since $z^2+1=\bar{z}$ has solutions $z=-\frac{1}{2}\pm \frac{\sqrt{7}}{2}i$. Also, $z^2$ is an element of $S\poly_2(4)$ since $z^2=\bar{z}$ has solutions $z=0,1,-\frac{1}{2}\pm\frac{\sqrt{3}}{2}i$.

Suppose $S\poly_n (k) \not= \emptyset$ for $k = n, n+2, \ldots, 3n-2$. For each such $k$, we will show that $S\poly_{n+1} (k+1) \not = \emptyset$. We first let 
\begin{align*}
&c(z) = p^{\prime}(z) - na_n z^{n-1} = (n-1)a_{n-1} z^{n-2} + \ldots + a_1,\\
&d(z) = p(z) - a_n z^{n} = a_{n-1} z^{n-1} + \ldots + a_0, \textrm{ and}\\
&\widetilde{c} (z) = \frac{z}{n+1}(1+|c(z)|).
\end{align*}
Since $d(z)$ and $\widetilde{c} (z)$ both have degree $n-1$, $|d(z)| + |\widetilde{c}(z)| \le C |z|^{n-1}$ for some constant $C$.
Now, assume $p \in S\poly_n (k)$. Choose $R>0$ sufficiently large such that the following hold true:
\begin{enumerate}
\item All roots of $p(z)-\bar{z}$ are contained in the circle $S_R$ of radius $R$ centered at the origin.
\item Outside of $S_R$, $\left|\frac{a_n z^{n}}{n+1}\right| - C|z|^{n-1} > |z|$.
\end{enumerate}
Denote the $k$ roots of $f(z) = p(z) - \bar{z}$ as $r_1, \ldots, r_k$. For each $i$, let $D_{r_i}$ be a sufficiently small disk centered at $r_i$ such that the $D_{r_i}$ are pairwise disjoint and:
\begin{enumerate}[resume]
\item There exists $A$ such that for all $z\in\mathbb{C}\backslash \bigcup D_{r_i},\ |f(z)| = |p(z) - \bar{z}| > A > 0$.
\item There exists $B$ such that for all $z\in\bigcup D_{r_i},\ |\mathcal{D}(z)| = ||p'(z)|^2 - 1| > B > 0$.
\end{enumerate}
Let $\widetilde{p}(z)=p(z)+\epsilon z^{n+1}$, $\widetilde{f}(z) = \widetilde{p}(z)-\bar{z}$, and let $\widetilde{k}_+$ and $\widetilde{k}_-$ denote the number of s.p and s.r zeros of $\widetilde{f}$, respectively. We will first prove that $\widetilde{k}_- = k_-$. Using (1) and (2), we can find $\epsilon>0$ sufficiently small so that
\begin{enumerate}[resume]
\item $\widetilde{f}$ remains non-zero inside of $S_R$ and outside of $\bigcup D_{r_i}$,
\item the winding number of $\widetilde{f}$ on each of the circles $C_{r_i} = \partial D_{r_i}$ remains as $\pm 2\pi$, and
\item the Jacobians of $\widetilde{f}$ and $f$ have the same sign on each $D_{r_i}$.
\end{enumerate}
Thus, by the Argument Principle, $\widetilde{f}$ has exactly one zero within each disc $D_{r_i}$ and this zero has the same orientation as the original zero of $f$ in $D_{r_i}$. We conclude that $\widetilde{f}$ and $f$ have the same number of s.r zeros inside of $S_R$.

We now show that $\widetilde{f}$ has no s.r zeros outside of $S_R$. Outside of $S_R,$ $\widetilde{f}(z)$ is s.p, except possibly in a small region determined by the inequality $|\widetilde{p}^{\prime}(z)|^2 = |(n+1)\epsilon z^n + na_n z^{n-1} +  c(z)|^2 \le 1$. By the triangle inequality, all points in the non-s.p region satisfy
\begin{equation*}
|(n+1)\epsilon z^n + na_n z^{n-1}| \le 1 + |c(z)|.
\end{equation*}
On this region,
\begin{align*}
|\widetilde{f}(z)| &\ge |\epsilon z^{n+1} + a_n z^n| - |d(z)| - |\bar{z}| \ge \left|\frac{a_n z^{n}}{n+1}\right| - \frac{|z|}{n+1}\left|(n+1)\epsilon z^n + na_n z^{n-1}\right| - |d(z)| - |z|\\
&\ge \left|\frac{a_n z^{n}}{n+1}\right| - \frac{|z|}{n+1}(1+|c(z)|) - |d(z)| - |z| \ge \left|\frac{a_n z^{n}}{n+1}\right| - |\widetilde{c}(z)| - |d(z)| - |z|\\
&\ge \left|\frac{a_n z^{n}}{n+1}\right| - C|z|^{n-1} - |z|.
\end{align*}
By the assumption on $R$, $|\widetilde{f}(z)| > 0$ on the non-s.p region outside of $S_R$. Since $\tilde{f}$ and $f$ have the same number of s.r zeros inside of $S_R$ and neither of them has any zeros outside of $S_R$, we conclude that $\widetilde{k}_- = k_-$. 

By Proposition \ref{APPLICATION}, $\widetilde{k}_+ - \widetilde{k}_- = n + 1$, so $\widetilde{k}_+ = k_+ + 1,$ which gives that $\widetilde{k} = k+1$. Thus we have shown that $S\poly_{n+1}(k+1)\not=\emptyset$ for $k+1=n+1,n+3,\ldots,3n-1$.  It remains to show $S\poly_{n+1}(3(n+1)-2)\not= \emptyset$; however, this follows from Geyer's examples \cite{GEYER 1}.
\end{proof}

\section{Rational Case}\label{RATIONAL CASE}

In this section, we will prove Theorem~\ref{RATIONAL: POSITIVE MEASURES} using methods similar to those in the polynomial case.
Throughout the proof, we will only consider rational functions $r(z)=\frac{p(z)}{q(z)}\in\rat_n$ for which $\deg{q}\geq\deg{p}$.  In terms of the description of $\rat_n$ given in the introduction, this amounts to throwing out the algebraic hyperplane $b_n=0$; see Equation~\ref{RATIONAL FUNCTION}. 

Using the Argument Principle, we can prove the following lemma, which has previously appeared in \cite[Cor. 2]{KHAVINSON-NEUMANN}.
\begin{lem}\label{S.P-S.R}
Let $r(z) =\frac{p(z)}{q(z)}$ be a simple rational function with $\deg q \geq \deg p$ and let $k_+$ and $k_-$ denote the number of sense-preserving and sense-reversing roots of $f(z) = r(z) - \bar{z}$, respectively.  Then $k_+ - k_- = n-1.$
\end{lem}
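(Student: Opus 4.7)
The plan is to apply the harmonic Argument Principle from Section~\ref{ARGUMENT PRINCIPLE} to $f(z) = r(z) - \bar{z}$ on a sufficiently large circle $S_R$ centered at the origin, directly mirroring the polynomial case (Proposition~\ref{APPLICATION}), but now carefully accounting for the contribution of poles of $r$.

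First, choose $R$ large enough that $S_R$ encloses every zero of $f$ and every pole of $r$ (the zeros of $q$). Since $r$ is simple, every zero of $f$ has order $\pm 1$, so the total $N$ of orders of zeros of $f$ inside $S_R$ equals $k_+ - k_-$. The only other task inside the disk is to account for the sum $M$ of orders of poles.

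Next, I would compute $M$. The poles of $f$ coincide with the poles of $r$. Near a pole $z_j$ of multiplicity $m_j$, $f(z) \sim c_j/(z-z_j)^{m_j}$, so on a small positively oriented circle $\gamma$ around $z_j$ the image winds $m_j$ times clockwise around the origin, i.e.\ $\omega_\gamma = -m_j$. By the ad hoc convention of Section~\ref{ARGUMENT PRINCIPLE}, the order of such a pole is $-\omega_\gamma = m_j$. Summing over all poles (using that $p$ and $q$ are coprime and that $\deg q = n$, which follows from the hypothesis $\deg q \geq \deg p$ together with $\deg r = n$), we get $M = n$.

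Finally, I would compute $\omega_{S_R}$. Because $\deg q \geq \deg p$, the rational term $r(z)$ stays bounded as $|z|\to \infty$, so for $R$ large the $-\bar{z}$ term dominates; hence $f|_{S_R}$ is homotopic through nonvanishing maps to the map $z\mapsto -\bar z$, giving $\omega_{S_R} = -1$. The Argument Principle then yields
\[
-2\pi \;=\; \Delta_{S_R}\arg f \;=\; 2\pi N - 2\pi M,
\]
so $k_+ - k_- = N = M - 1 = n - 1$, as claimed. There is no real obstacle here; the only points requiring a bit of care are the sign convention for the order of a pole and the use of the hypothesis $\deg q \geq \deg p$ to simultaneously guarantee that $r$ is bounded at infinity and that the number of poles (with multiplicity) equals $n$.
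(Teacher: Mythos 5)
Your proposal is correct and follows essentially the same route as the paper's proof: apply the harmonic Argument Principle on a large circle where the $-\bar z$ term dominates (giving $\omega_{S_R}=-1$), count the poles with total order $M=\deg q=n$, and conclude $k_+-k_-=N=M-1=n-1$. Your treatment of the pole orders via the sign convention $-\omega_\gamma=m_j$ is a slightly more explicit version of the paper's remark that $f$ is sense-preserving at each of the $n$ poles, but the argument is the same.
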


\begin{proof}[Proof of Lemma~\ref{S.P-S.R}]
Let $f(z)=r(z)-\bar{z}$.  Consider a circle $S_{R}$ of large radius $R$ centered at the origin such that the $S_{R}$ contains all the zeros and poles of $f$. Since $\deg p \leq \deg q$, $r(z)$ is at most $O(1)$ for $z$ large. Then $f$ is sense-reversing on $S_{R}$ with an argument change of $-2\pi$. By the Argument Principle, $-2\pi = 2\pi\cdot(N-M)$, where $N = k_+ - k_-$ and $M$ is the number of poles of $f$ counted with orientation. But notice that at each of the $n = \deg q$ poles of $r$, $f$ is sense-preserving, so $-2\pi = 2\pi\cdot(k_+ - k_- -n)$, which gives $k_+ - k_- = n-1$, as desired.
\end{proof}

The examples given in Proposition \ref{5n-5 BOUND} below were previously presented by Rhie in a preprint \cite{RHIE}.  Since they were never published and since we will later use details from the construction, we will reproduce them here. 

\begin{prop}[Rhie \cite{RHIE}]\label{5n-5 BOUND}
For $n\ge 2$, $S\rat_n (5n-5)$ is a non-empty, open subset of $\rat_n$.
\end{prop}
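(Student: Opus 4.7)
The proposition has two parts: openness of $S\rat_n(5n-5)$ and its non-emptiness. Openness follows the template already used in the proof of Theorem~\ref{POLYNOMIAL: POSITIVE MEASURES}. Given $r_0 \in S\rat_n(5n-5)$, at each of its $5n-5$ simple zeros of $f_0(z) = r_0(z)-\bar{z}$ the Jacobian is non-zero, so the Implicit Function Theorem produces a unique nearby simple zero for every rational function $r$ sufficiently close to $r_0$. This gives at least $5n-5$ zeros for every nearby $r$, while the Khavinson--Neumann upper bound forces the count to be at most $5n-5$. The restriction $\deg q \geq \deg p$ confines the zero set to a compact region varying continuously with $r$, so no zero can appear from infinity, and the count is locally constant.

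For non-emptiness, the plan is to reproduce Rhie's construction of an $n$-mass physical configuration realizing $5n-5$ images, which automatically gives an element of $\rat_n$. Fix $n \geq 2$ and consider the family of configurations parametrized by a radius $a > 0$ and a small parameter $\epsilon > 0$:
\begin{itemize}
\item For $n \geq 3$, place $n-1$ equal unit masses at the vertices of a regular $(n-1)$-gon of radius $a$ centered at the origin, and a small mass $\epsilon$ at the origin.
\item For $n = 2$ one verifies the analogous claim directly by explicit computation.
\end{itemize}
The plan then proceeds:
\begin{enumerate}
\item Analyze the base ($\epsilon = 0$) using the $\mathbb{Z}_{n-1}$ rotational symmetry, showing that for $a$ in an appropriate range the equation $r_0(z) = \bar{z}$ has a specific number of simple solutions.
\item Turn on the central perturbation $\epsilon > 0$. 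The Implicit Function Theorem guarantees that all base zeros persist as simple zeros of $r_\epsilon(z) - \bar{z}$ for $\epsilon$ small.
\item Perform a local bifurcation analysis near the critical curve $|r_0'(z)|^2 = 1$, where $f_0$ is nearly singular. The central perturbation breaks the degeneracies along this curve and creates new pairs of simple zeros; symmetry reduces this to analysis on a single fundamental wedge.
\item Sum the persistent and newly-created zeros and confirm the total equals $5n-5$, cross-checking via Lemma~\ref{S.P-S.R} that $k_+ - k_- = n-1$.
\item Verify simplicity of all zeros for generic small $\epsilon$ and appropriately chosen $a$.
\end{enumerate}

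The main obstacle is step (3): choosing $a$ so that the base configuration sits just above a critical bifurcation value, and then counting precisely how many new simple zeros the central perturbation injects. This requires a careful local normal form for $r_0(z) - \bar{z}$ near its critical arcs, combined with an application of the Argument Principle on small contours encircling the bifurcation loci, and is the technical core of Rhie's original argument. A secondary concern is bookkeeping under symmetry: the new zeros come in $\mathbb{Z}_{n-1}$-orbits whose sizes must be tracked carefully to hit the exact target $5n-5$, and one must verify that newly-created zeros do not collide with persistent ones as $\epsilon$ varies.
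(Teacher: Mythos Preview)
Your openness argument is correct and matches the paper. Your choice of configuration---$(n-1)$ equal masses on a regular polygon plus an $\epsilon$ mass at the center---is exactly Rhie's construction (up to reindexing). However, step~(3) misidentifies where the new zeros come from, and this is a genuine gap.

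The new zeros created by the $\epsilon$-perturbation do \emph{not} arise from a bifurcation along the critical curve $|r_0'(z)|=1$. They appear in a $2\sqrt{\epsilon}$-neighborhood of the \emph{origin}, where the added term $\epsilon/z$ introduces a new pole. The paper's argument runs as follows. First, the base configuration of $m:=n-1$ masses on the circle of a specific small radius $a$ yields exactly $3m+1$ simple zeros of $f_0$: one at the origin, and three on each of the $m$ symmetry rays (this is checked directly on the real axis and, for even $m$, on the bisecting ray). Second, for small $\epsilon$ one shows $|\widetilde f(z)|>\sqrt{\epsilon}$ for all $z$ outside $\{|z|<2\sqrt{\epsilon}\}$ and outside small disks about the $3m$ nonzero base roots; hence those $3m$ zeros persist with unchanged orientation, the zero at the origin is destroyed, and any \emph{new} zeros lie inside $|z|<2\sqrt{\epsilon}$. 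Third, inside that tiny disk the lens equation on each symmetry ray reduces to an arbitrarily small perturbation of $-a^{m}z^{2}+a^{m}\epsilon=0$, giving two simple roots near $\pm\sqrt{\epsilon}$; symmetry then produces $2m$ new simple zeros. The total is $3m+2m=5m=5n-5$. (The small cases $n=2,3$ are handled separately by the circle configuration alone.)

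Your proposed mechanism cannot yield this count. For $m\geq 3$ one has $r_0'(0)=0$, so the origin is far from the critical curve, and away from the origin the perturbation $\epsilon/z$ is uniformly small---too small to force any zero of $\widetilde f$ to cross the Jacobian locus. Thus no saddle-node bifurcation along $|r_0'|=1$ occurs; the essential phenomenon is the creation of a new \emph{pole}, near which the Argument Principle forces new sense-preserving zeros to appear. Replacing your step~(3) with a local analysis in the disk $|z|<2\sqrt{\epsilon}$, as above, repairs the plan.
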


\begin{proof}
\begin{figure}
\includegraphics[height=9cm]{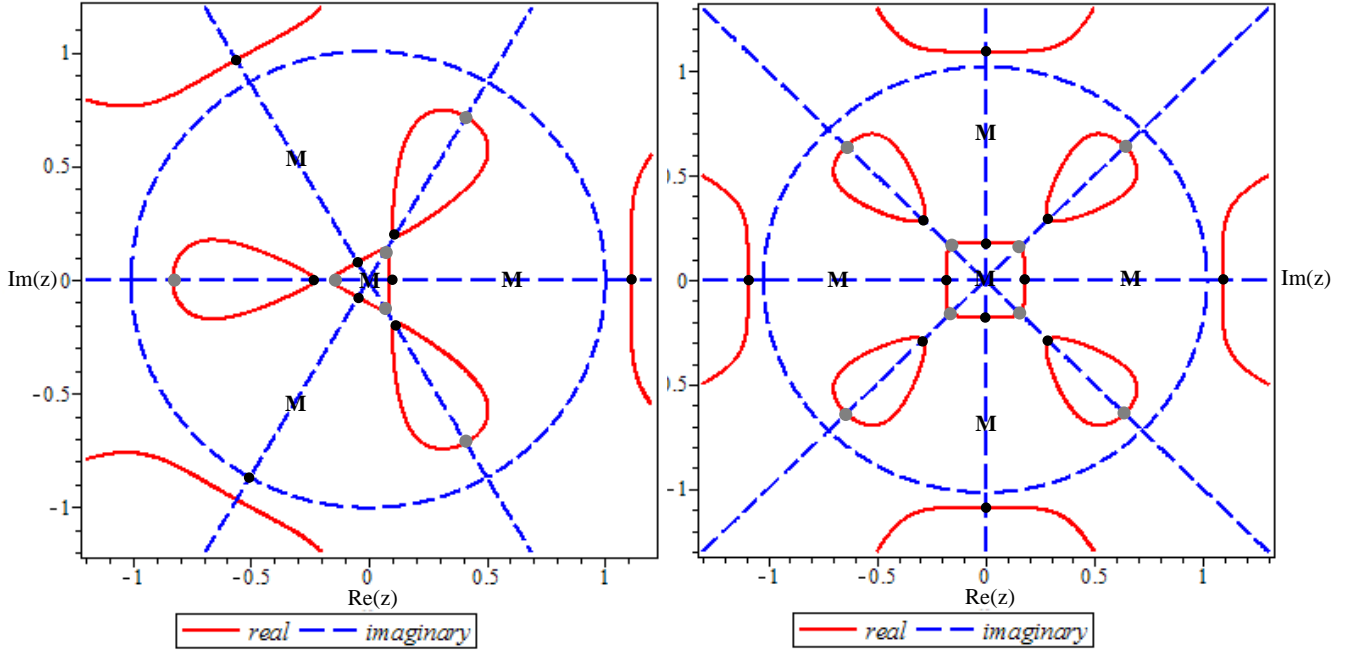}
\caption{The diagram on the left shows the real and imaginary parts of the lens equation for three point masses around a circle and an epsilon mass at the center.  The diagram on the right shows the configuration for four point masses and an epsilon mass at the center. The bold M's represent mass positions. The black dots are sense-preserving image positions and the gray dots are sense-reversing image positions.}
\label{mass configurations}
\end{figure}

We will use the Lens Equation ~\eqref{LENS EQUATION} to show that the upper bound of $5n-5$ solutions is attainable for $n\geq 2$ by a specific choice of masses $\sigma_j$ and locations of masses $z_j$. Throughout the proof, the reader may find it helpful to look at Figure 
\ref{mass configurations}, where the construction is illustrated with four and five masses.

Consider $n$ equal masses (with one mass on the positive real axis) equally spaced around a circle of radius $a = (n-1)^{-\frac{1}{n}}\left(\frac{n-1}{n}\right)^{\frac{1}{2}}$ (in fact, any small $a$ will work) centered at $0$. With $z_j=ae^{\frac{2\pi i}{n}(j-1)}$, \eqref{LENS EQUATION} becomes:
\begin{equation*}
0 = r(z) - \bar{z} ;\  r(z) = \frac{z^{n-1}}{z^n - a^n}
\end{equation*}

In the case that $n$ is odd, we only need to consider the lens equation on the real line since the mass configuration is symmetric by a rotation of $\frac{2\pi}{n}$. There is always one solution at $z=0$, and the other solutions satisfy
\begin{equation}\label{odd case}
z^n - z^{n-2} - a^n = 0.
\end{equation}
The equation $z^n - z^{n-2}$ has three real solutions, and it is easy to see that shifting this graph vertically by a sufficiently small amount ($a^n$ in our case) will still give three real solutions. Hence, we have a total of $3n + 1$ images.  (This was previously observed in \cite{MPW 97}.)

In the case that $n$ is even, we need to consider the real line and the line obtained by a rotation of $\pi/n$. Solving \eqref{odd case} gives two real solutions by an analysis similar to that in the odd case. On the rotated line, we set $z = te^{i\pi/n}$ with $t\not = 0$ and the lens equation,
\begin{equation*}
\bar{z} = \frac{z^{n-1}}{z^n - a^n},
\end{equation*}
simplifies to
\begin{equation}\label{even case}
t^n - t^{n-2} + a^n = 0.
\end{equation}
A similar analysis of this equation gives four real solutions. Therefore, there are a total of six non-zero solutions on the two neighboring lines, and the total number of images in the even case is $6 \cdot n/2 + 1 = 3n + 1$.

We now drop a small mass $\epsilon > 0$ at the center of the circle. This changes the number of masses to $n+1$, so we now require $5n$ distinct image positions. We will first check that if $\epsilon$ is sufficiently small, then all $3n$ solutions away from the origin will persist and any new solutions will occur within distance $2\sqrt{\epsilon}$ from the origin. 

Let $f(z) = r(z) - \bar{z}$. Denote the $3n+1$ roots of $f$ as $0 = r_0, \ldots, r_{3n}$. For each $i$, let $D_{r_i}$ be a sufficiently small disk centered at $r_i$ such that the $D_{r_i}$ are pairwise disjoint and
\begin{enumerate}
\item There exists $A$ such that for all $z\in \mathbb{C}\setminus \bigcup D_{r_i}, |f(z)| = |r(z) - \bar{z}| > A > 0$.
\item There exists $B$ such that for all $z\in\bigcup D_{r_i},\ |\mathcal{D}(z)| = ||r^{\prime}(z)^2| - 1| > B > 0$.
\end{enumerate}
Suppose that $\epsilon$ is sufficiently small so that $A > \sqrt{\epsilon}$ and so that the disk $D^{\prime}_{r_0}$ of radius $2\sqrt{\epsilon}$ centered at $0$ satisfies $\overline{D^{\prime}_{r_0}} \subset D_{r_0}$. By (2), there are no critical points in the annulus $\overline{D_{r_0}}\setminus D^{\prime}_{r_0}$, so the minimum of $|f|$ (over this annulus) occurs on its boundary. On $\partial D_{r_0}$, $|f(z)| > B$, and on $\partial D^{\prime}_{r_0},$
\begin{equation*}
|f(z)| = |r(z) - \bar{z}| \ge |\bar{z}| - |r(z)| \ge \sqrt{\epsilon}
\end{equation*}
since $r(0) = r'(0) = 0$. Hence, we have:

\begin{itemize}
\item[($1^\prime$)] For all z $\in \mathbb{C} \setminus D^{\prime}_{r_0} \cup D_{r_1} \cup \cdots \cup D_{r_{3n}}$,\, $|f(z)|> \sqrt{\epsilon}$.
\end{itemize}

Adding the mass $\epsilon$ results in $\widetilde{f}(z) = \widetilde{r}(z) - \bar{z} = r(z) + \frac{\epsilon}{z} - \bar{z}$. By ($1^\prime$) and (2), if $\epsilon$ is sufficiently small, then in the region $|z| \ge 2\sqrt{\epsilon}$, each of the zeros of $\widetilde{f}$ is obtained by a continuous motion of a zero of $f$, and each of them will remain simple with the same orientation. 

We now focus on $|z| < 2\sqrt{\epsilon}$. The new lens equation on the real line is
\begin{equation}\label{NEW IMAGES}
z^{n+2} - z^n(1+\epsilon) - z^2 a^n + \epsilon a^n = 0.
\end{equation}
At this scale, equation \eqref{NEW IMAGES} is an arbitrarily small perturbation of the equation $-z^2 a^n + \epsilon a^n = 0$, which gives two simple solutions at $\pm \sqrt{\epsilon}$. If $n$ is odd, then symmetry under rotation by $2\pi/n$ leads to a total of $2n$ new simple images in the $2\sqrt{\epsilon}$ neighborhood of the origin.  Since we originally had $3n+1$ simple zeros, we now
have exactly exactly $3n + 1 + 2n - 1 = 5n$ simple zeros, as desired. 

If $n$ is even, symmetry under rotation by $2\pi/n$ only leads to $n$ new images in the $2\sqrt{\epsilon}$ neighborhood of the origin. However, on the line $z=t e^{i \pi/n}$ the lens equation is the same as Equation \eqref{NEW IMAGES}, except that the first two terms have the opposite sign.  Two new simple zeros are produced on this line at approximately $t = \pm \sqrt{\epsilon}$.   Under symmetry by rotation of $2\pi/n$, this produces the $n$ additional new images that we needed.

In the case of $n = 2$ and $n = 3$, this construction fails due to the low number of masses. However, a quick check shows that the original construction of $n$ equally spaced masses around the circle of radius $a$ gives $5$ and $10$ simple zeros in these two cases. For example, in the case of $n = 2$, we have $r(z) = \frac{z}{z^2 - 1/2}$, and solving the equation $\bar{z} = r(z)$ will give five solutions. The reason we do not reach the $3n + 1$ bound in the case $n = 2$ is because on the imaginary axis, there are only two solutions (since the degree is not high enough). In the case of $n = 3$, $3n + 1 = 5n-5$, so we see that the original construction works as well.
\end{proof}

\begin{prop}\label{RATIONAL: 5n-7}
For $n\geq 2$, $S\rat_{n}(5n-7)$ is a non-empty, open subset of $\rat_n$.
\end{prop}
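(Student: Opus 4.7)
The openness of $S\rat_n(5n-7)$ in $\rat_n$ follows from the Implicit Function Theorem exactly as in the polynomial case: at each simple zero of $f(z) = r(z) - \bar z$ the Jacobian $|r'(z)|^2 - 1$ is nonzero, so zeros move continuously with $r$, while the Argument Principle on a large circle prevents new zeros from appearing under small perturbation.

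For non-emptiness when $n = 2$, I would directly verify that $r(z) = z/(z^2 - b)$ with $b > 1$ lies in $S\rat_2(3)$: clearing denominators, the equation $r(z) = \bar z$ becomes $z(|z|^2 - 1) = b\bar z$, which yields exactly the three solutions $z = 0, \pm\sqrt{1+b}$, each simple by a direct computation of $\mathcal D = |r'|^2 - 1$.

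For $n \geq 3$ the plan is to begin with the Rhie-type example
\[
r^*(z) = \frac{z^{n-2}}{z^{n-1} - a^{n-1}} + \frac{\epsilon}{z} \in S\rat_n(5n-5)
\]
from Proposition~\ref{5n-5 BOUND}, and destroy exactly one of its $(n-1)$ antipodal near-origin pairs through a single saddle-node bifurcation. Varying $\epsilon$ alone would preserve the $\mathbb Z/(n-1)$ rotational symmetry of $r^*$ and force all $(n-1)$ pairs to bifurcate simultaneously; to obtain a single pair annihilation I must break this symmetry. Concretely, translate the central pole off the origin:
\[
r_t(z) = \frac{z^{n-2}}{z^{n-1} - a^{n-1}} + \frac{\epsilon}{z - t}, \qquad t \in \mathbb R.
\]
At $t = 0$ we have $r_0 = r^* \in S\rat_n(5n-5)$, and openness keeps $r_t$ in this stratum for small $|t|$. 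On the other hand $r_t$ cannot remain in $S\rat_n(5n-5)$ for all $t$: as $|t|$ grows toward the outer radius $a$ the central pole interacts nontrivially with the outer mass structure, the near-origin asymptotic expansion underlying Rhie's proof breaks down, and the maximal count cannot persist. Thus the path $r_t$ meets the algebraic set $N\rat_n$ of Proposition~\ref{NONSIMPLE} at some first critical value $t^*$.

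The main obstacle will be to show that this first crossing is a simple saddle-node annihilating one antipodal pair -- one sense-preserving and one sense-reversing zero, consistent with the invariant $k_+ - k_- = n - 1$ from Lemma~\ref{S.P-S.R} -- so that the count drops by exactly $2$ to $5n - 7$. Since $N\rat_n$ is contained in a proper real algebraic subset of $\rat_n$ by Proposition~\ref{NONSIMPLE}, a one-parameter family in sufficiently general position meets it transversally at codimension-one saddle-nodes; if the explicit family $r_t$ happens not to be in general position, we replace it by a nearby perturbation within $\rat_n$ whose first encounter with $N\rat_n$ is a clean transverse saddle-node. For $t$ just beyond $t^*$ this produces the required element of $S\rat_n(5n-7)$.
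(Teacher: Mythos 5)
Your high-level strategy---start from Rhie's configuration and move the central $\epsilon$-mass off the origin so as to annihilate exactly one near-origin pair of images---is exactly the idea of the paper's proof, and your $n=2$ base case is fine (in fact your choice $b>1$ makes the zero at the origin cleanly sense-reversing). But the two steps you defer are precisely where all the mathematical content lies, and as written they are gaps, not proofs. First, you never establish that the path $r_t$ actually leaves $S\rat_n(5n-5)$: since that set is open and the path is connected, if $r_t$ never meets $N\rat_n$ the count stays $5n-5$ forever, and your justification (``the asymptotic expansion breaks down, so the maximal count cannot persist'') is a heuristic, not an argument. Second, and more seriously, the claim that the first crossing of $N\rat_n$ is a transverse saddle-node destroying exactly one pair does not follow from Proposition~\ref{NONSIMPLE}. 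Knowing that $N\rat_n$ sits inside a proper real algebraic set says nothing about its local stratification; to invoke ``general position'' you would need to prove that the top-dimensional stratum of the non-simple locus consists of functions with a single fold-type degenerate zero, and that a perturbed path can be arranged to start in $S\rat_n(5n-5)$, still be forced to cross, and cross only that stratum. None of this is established, and it is not obviously easier than the direct computation it replaces.

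The paper's proof shows why this cannot be waved through: the displacement of the central mass must be taken of the very specific size $|b| = A\epsilon^{(n-1)/2}$ and in a direction $\phi$ chosen according to the parity of $n$, so that the near-origin images are governed at leading order by $\sin n\theta = C\sin(\phi-\theta)$ with $C = A\cdot c$. For $n$ even one checks by hand that as $C$ increases through $1$ exactly one tangency forms (at $\theta = \pi/2n$) and then breaks, killing exactly two solutions. For $n$ odd the leading-order equation $\sin n\theta = C\cos\theta$ retains the antipodal symmetry $\theta \mapsto \theta + \pi$, so \emph{two} tangencies occur simultaneously---a genuine non-generic degeneracy of exactly the kind your transversality appeal assumes away---and the paper must go to the next order in $\epsilon$ to show the correction term $A\epsilon^{(n-2)/2}\cos((n-1)\theta)$ breaks one tangency apart and resolves the other into two simple roots. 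Your proposal even flags the symmetry problem but then assumes that moving the pole along the real axis resolves it; the odd case shows it does not at leading order. To complete your argument you would need to carry out this asymptotic analysis (or an equivalent substitute), at which point you have reproduced the paper's proof.
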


\begin{proof}[Proof of Proposition~\ref{RATIONAL: 5n-7}]
Placing mass $1/2$ at $z = 1$ and $z = -1$ produces three images. Placing mass $1$ at $z = \frac{1}{2}$ and $z = -\frac{1}{2} \pm \frac{\sqrt{3}}{2}i$ produces eight images. 

For four or more masses, the construction is a perturbation of Rhie's examples, which consisted of $n$ equal masses equally spaced around the circle of radius $a$ and an $\epsilon$ mass at the origin. We claim that a small perturbation for the location of the $\epsilon$ mass from the origin will result in $5n-2$ simple solutions. More exactly, for appropriate choices of $A$ and $\phi$, we claim that the equation 
\begin{equation}\label{NEW LENS}
\frac{z^{n-1}}{z^n - c} + \frac{\epsilon}{z-b} = \bar{z},
\end{equation} 
where $b = A\epsilon^{(n-1)/2}\cdot e^{i \phi}$ and $c = a^n$ is the constant from Proposition~\ref{5n-5 BOUND}, has exactly $5n-2$ simple solutions so long as $\epsilon > 0$ is sufficiently small.

As in the Proof of Proposition \ref{5n-5 BOUND}, a perturbation by $\frac{\epsilon}{z-b}$ of $f(z) = r(z) - \bar{z}$ will not affect the roots outside of the disk of radius $2\sqrt{\epsilon}$ centered at the origin so long as $|b| < \sqrt{\epsilon}$. Hence, we need only consider Equation~\eqref{NEW LENS} inside the disk of radius $2\sqrt{\epsilon}$. 
 
Equation~\eqref{NEW LENS}, after simplification, becomes 
\begin{equation}\label{NEW LENS 1}
\frac{z^n}{z^n - c} + \epsilon = z\bar{z} - b\bar{z} + \frac{bz^{n-1}}{z^n - c}.
\end{equation}
Replacing $z^n - c$ by $-c$ in Equation~\eqref{NEW LENS 1} gives
\begin{equation}\label{APPROXIMATION}
\epsilon - \frac{z^n}{c} = z\bar{z} - b\bar{z} - \frac{bz^{n-1}}{c},
\end{equation}
which is an approximation of Equation~\eqref{NEW LENS 1} up to an error term of order $\epsilon^{n}$. Let $z = re^{i\theta}$. The real part of Equation~\eqref{APPROXIMATION} gives $r = \sqrt{\epsilon} + \mathcal{O}(\epsilon)$. We now compare the imaginary parts of both sides, which will give us the values of $\theta$. 

If $n$ is even, let $\phi = \pi/2 + \pi/2n$. The lowest order terms for the imaginary part of Equation~\eqref{APPROXIMATION} are of order $\epsilon^{n/2}$. They are
\begin{equation*}
\Imag\left(-\frac{z^n}{c}\right) = \Imag\left(-b\bar{z}\right).
\end{equation*}
Using that $r = \sqrt{\epsilon} + \mathcal{O}(\epsilon)$ and $|b| = A\epsilon^{(n-1)/2}$, the lowest order terms give
\begin{equation*}
\sin n\theta = C\sin(\phi - \theta),
\end{equation*}
where $C = A\cdot c$. We notice that when $\theta = \pi/2n$, both sides attain their maximum value. The left hand side attains its minimum value when $\theta = \frac{(4k+3)\pi}{2n}$ for integer $k$. The right hand side attains its minimum when $\theta = \frac{(-4kn - 2n + 1)\pi}{2n}$ for integer $k$. However, $4k + 3 \not\equiv -4kn - 2n + 1 \pmod 4$ since $n$ is even. Therefore, $\sin n\theta$ and $\sin(\phi-\theta)$ are tangent at only one point, $\theta = \pi/2n$. When $C=0$, there are $2n$ solutions - a simple calculation shows that for $C < 1$, $\sin n\theta$ and $\sin(\phi-\theta)$ cannot be tangent. Hence, the first time a tangency occurs is when $C = 1$, and from above, this tangency is located at the single point $\theta = \pi/2n$. Increasing $C$ by a sufficiently small amount will pull apart this tangency, but the other simple solutions to $\sin n\theta = \sin(\phi-\theta)$ will persist, so we have lost exactly two solutions, as desired.

When $n$ is odd, for any choice of $\phi$, the above approach produces two tangencies and thus it cannot be used without considering higher order terms. Let $\phi = \pi/2$. As before, $r = \sqrt{\epsilon} + \mathcal{O}(\epsilon)$. The imaginary part of Equation~\eqref{APPROXIMATION} yields
\begin{equation}\label{IMAGINARY}
\sin n\theta = \frac{c|b|}{r^{n-1}} \cos \theta + \frac{|b|}{r} \cos((n-1)\theta).
\end{equation}
Up to terms of order $\epsilon^{(n-1)/2}$, Equation~\eqref{IMAGINARY} becomes
\begin{equation*}
\sin n\theta = C \cos \theta + A\epsilon^{(n-2)/2}\cos ((n-1)\theta),
\end{equation*}
where $C = A\cdot c$. We first look at the lowest order terms, which gives
\begin{equation}\label{SIMPLE EQUATION}
\sin n\theta = C\cos\theta.
\end{equation}
When the mass $\epsilon$ is located exactly at the center, $C = 0$, and there are $2n$ solutions. First, notice that if $\theta$ is a solution to \eqref{SIMPLE EQUATION}, so is $\pi + \theta$. Increasing $C$ will not change the number of solutions to \eqref{SIMPLE EQUATION} until $C \cos \theta$ is tangent to $\sin n\theta$ at some point. For this to occur, we require a solution to \eqref{SIMPLE EQUATION} to also be a solution to
\begin{equation}\label{TANGENCY}
n\cos n\theta = -C \sin \theta.
\end{equation}
Squaring $n\cdot$\eqref{SIMPLE EQUATION} and \eqref{TANGENCY} and summing gives 
\begin{equation*}
n^2 = C^2 + C^2(n^2-1)\cdot \cos^2 \theta,
\end{equation*}
which gives two solutions for $\cos \theta$ and four for $\theta$ - however, one can readily verify that two of these are extraneous, so there are only two values of $\theta$ for which \eqref{SIMPLE EQUATION} and \eqref{TANGENCY} are satisfied, and moreover they differ by $\pi$. Reconsidering the small term $A\epsilon^{(n-2)/2} \cos((n-1)\theta)$, we find that $\cos ((n-1)\theta) = \cos ((n-1)(\pi + \theta))$. Since one of the tangencies was a minimum for both sides of Equation~\eqref{SIMPLE EQUATION} and the other was a maximum, this perturbation will pull apart one of the tangencies and change the other to be two simple solutions. 
\end{proof}

With these facts, we can prove Theorem \ref{RATIONAL: POSITIVE MEASURES}.

\begin{proof}[Proof of Theorem \ref{RATIONAL: POSITIVE MEASURES}]
We have already shown in Proposition~\ref{NONSIMPLE} that $N\rat_n$ is contained in a proper real algebraic subset of $\rat_n$ so we will focus our attention on simple rational functions. 

We will also throw out the proper algebraic set given by $b_n=0$ in the parameterization of rational functions (Equation ~\eqref{RATIONAL FUNCTION}). This allows us to restrict our attention to the case that $\deg q \geq \deg p$.  Under these assumptions, we have that the number of zeros, $k$, satisfies $n-1\leq k$, from the Argument Principle, and $k\leq 5n-5$, from ~\cite{KHAVINSON-SWIATEK}. Furthermore, from Lemma ~\ref{S.P-S.R}, we have that $k\equiv n-1\bmod 2$.  Thus, it suffices to show that for each $n\geq 2$, $S\rat_n(k)$ is non-empty for $k=n-1,n+1,n+3,\ldots,5n-5$.

We will proceed with an induction on $n$, similar to the proof of Theorem~\ref{POLYNOMIAL: POSITIVE MEASURES}.
First consider the case where $n = 2$. The equation $r(z) = \frac{c}{z^2}=\bar{z}$ has exactly one root for any $c$ so $S\rat_2(1)$ is non-empty.  Furthermore, by Proposition~\ref{5n-5 BOUND} and Proposition~\ref{RATIONAL: 5n-7} we have that $S\rat_2(3)$ and $S\rat_2(5)$ are non-empty, respectively. 

Now, suppose for some $n\geq 2$ that $S\rat_n (k)\neq\emptyset$ for $k=n-1,n+1,n+3,\ldots,5n-5$. We will first show that $S\rat_{n+1} (k+1)\neq\emptyset$ for the same values of $k$. By assumption, there exists $r(z) \in S\rat_n(k)$. Denote the $k$ roots of $f(z) = r(z) - \bar{z}$ as $r_1, \ldots, r_k$ and the $n$ poles of $f$ as $z_{1}, z_{2}, \ldots, z_{n}$. Consider the rational function $\widetilde{r} = r(z) + \frac{\epsilon}{z-\widetilde{z}}$, where $\widetilde{z} \not = r_i$ and $\widetilde{z} \not= z_i$ for any $i$. For each $i$, let $D_{r_i}$ be a sufficiently small disk centered at $r_i$ and let $D_{z_i}$ be a sufficiently small disk centered at $z_i$ such that the $D_{r_i}$ and $D_{z_i}$ are pairwise disjoint, $\widetilde{z}\notin \bigcup D_{r_i} \bigcup D_{z_i}$, and
\begin{enumerate}
\item There exists $A$ such that for all $z\in\mathbb{C}\backslash \bigcup D_{r_i},\ |f(z)| = |\bar{z}-r(z)| > A > 0$.
\item There exists $B$ such that for all $z\in\bigcup D_{r_i},\ |\mathcal{D}(z)| = ||r^{\prime}(z)^2| - 1| > B > 0$.
\item There exists $C$ such that for all $z\in\mathbb{C}\backslash \bigcup D_{z_i},\ |r'(z)| < C$.
\end{enumerate} 
Clearly, $\widetilde{r}$ has degree $n+1$, so it is sufficient to show that $\widetilde{f} = \widetilde{r}(z) - \bar{z}$ has $k+1$ zeros. For $\epsilon>0$ sufficiently small, the winding number around each of the circles $C_{r_i} = \partial D_{r_i}$ remains as $\pm 2\pi$ and the Jacobian does not change sign in $D_{r_i}.$ Therefore, each root $r_i$ of $f$ moves continuously to some new simple root $\widetilde{r}_i$ of the same ``orientation" as $r_i$. Moreover, $\widetilde{r}_i$ remains in the disk $D_{r_i}$, so there is still exactly one root in each disk.

We claim now that any new root of $\widetilde{f}$ must be sense-preserving. If $\widetilde{f}(z) = 0$, then \begin{equation*}
\left|\frac{\epsilon}{z-\widetilde{z}}\right| = |\bar{z} - r(z)| > A \Rightarrow \left|\frac{\epsilon}{(z-\widetilde{z})^2}\right| > \frac{A^2}{\epsilon}.
\end{equation*} 
At any such point, 
\begin{equation*}
|\widetilde{r}^{\prime}(z)| = \left|r^{\prime}(z) - \frac{\epsilon}{(z-\widetilde{z})^2}\right| > \left|\frac{\epsilon}{(z-\widetilde{z})^2}\right| - |r^{\prime}(z)|  > \frac{A^2}{\epsilon} - C.
\end{equation*}
As long as $\epsilon$ is sufficiently small, $|\widetilde{r}^{\prime}(z)| > 1$, so any new root of $\widetilde{f}$  is sense-preserving. By Lemma \ref{S.P-S.R}, $\widetilde{k}_+ - \widetilde{k}_- = n$. As no new s.r roots are created, $k_-$ stays the same, so $k_+$ must have increased by exactly $1$, as desired.

We have shown that $S_{n+1}(k)$ is non-empty for $k=n,n+2,\ldots,5n-4 =5(n+1) - 9$.  However, Propositions \ref{5n-5 BOUND} and \ref{RATIONAL: 5n-7} give that $S_{n+1}(5n-2)$ and $S_{n+1}(5n)$ are also non-empty, thus our proof is complete.
\end{proof}

\section{Physical Case}\label{PHYSICAL CASE}

The Lens Equation ~\eqref{LENS EQUATION} can be rewritten as
\begin{equation}\label{LENS EQ}	
0=\bar{z}-\sum_{i=1}^{n}{\frac{\sigma_{i}(\bar{z}-\bar{z_{i}})}{|\bar{z}-\bar{z_{i}}|^2}},
\end{equation}
where $0$ is the source position, $z_{i}$ is the position of the $i$th point mass, and $z$ is the image position.

The \emph{time delay function} $T$ is defined to be
\begin{align*}
	T(z)=\frac{\left|z\right|^2}{2}-\sum_{i=1}^{n}{\sigma_{i}\ln{|z-z_{i}|}}.
\end{align*}

Note that $z_0=x+iy$ is a solution to $\partial_{z}T=(T_{x}-iT_{y})/2=0$ (a critical point of $T$) if and only if $z$ is a solution to Equation~\ref{LENS EQ}. Moreover, $z_0$ is a non-degenerate local minimum (respectively maximum) of $T$ iff $z_0$ is a s.r (respectively s.p) simple zero of \eqref{LENS EQ}.

\begin{prop}[Petters \cite{PETTERS2}]\label{LOWER BOUND}
If all of the masses $\sigma_i$ are positive and if all of the zeros of the lens equation \eqref{LENS EQ} are simple, then there are at least $n+1$ of them.
\end{prop}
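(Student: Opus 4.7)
The plan is to prove this via Morse theory applied to the time-delay function $T$, using crucially that $T$ is a Morse function, proper, and subharmonic. First, under the simplicity hypothesis, at each critical point $z_0$ the Jacobian $\mathcal{D}(z_0)=|r'(z_0)|^2-1$ is nonzero, which forces the real Hessian of $T$ to be nonsingular at $z_0$: from $2\partial_z T=\bar z-r(z)$ and the elementary identity $\partial_z\partial_{\bar z}T=\tfrac12$ (since each $\ln|z-z_i|$ is harmonic away from $z_i$), one computes that the Hessian of $T$ at any critical point has trace $2$ and determinant $1-|r'(z_0)|^2=-\mathcal{D}(z_0)$. So $T$ is Morse. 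Moreover, the positivity $\sigma_i>0$ guarantees $T(z)\to+\infty$ both as $|z|\to\infty$ (dominated by $|z|^2/2$) and as $z\to z_i$ (dominated by $-\sigma_i\ln|z-z_i|$), so $T$ is proper and bounded below on $U:=\mathbb{C}\setminus\{z_1,\ldots,z_n\}$.

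Next, since the trace of the Hessian is identically $2>0$, $T$ admits no local maxima, and every critical point has Morse index $0$ or $1$. Reading off the sign of the Hessian determinant, critical points with $|r'(z_0)|<1$ are local minima of $T$ and correspond exactly to sense-reversing zeros of $f=r-\bar z$, while those with $|r'(z_0)|>1$ are saddles and correspond to sense-preserving zeros. Denote these counts $N_0=k_-$ and $N_1=k_+$.

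The Morse theory of $T$ on the proper exhaustion by sublevel sets $\{T\le c\}$ then computes the Euler characteristic of $U$: for $c$ large enough, $\{T\le c\}$ is a closed topological disk in the plane with $n$ small open disks (neighborhoods of the $z_i$) removed, giving $\chi(\{T\le c\})=1-n$, while on the Morse side this same Euler characteristic equals $N_0-N_1=k_--k_+$. Therefore $k_+-k_-=n-1$, recovering Lemma~\ref{S.P-S.R} in the physical case purely from Morse theory.

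Finally, since $T$ is proper and bounded below on $U$, it attains its infimum at an interior point, which under the simplicity hypothesis is a nondegenerate local minimum; thus $k_-\ge 1$. Combining with $k_+-k_-=n-1$ gives $k=k_++k_-=2k_-+(n-1)\ge n+1$, as claimed. The only subtlety I expect is making the Morse-theoretic Euler-characteristic computation on the non-compact domain $U$ rigorous — but this is standard because $T$ is a proper Morse function with compact sublevel sets, and the topological heart of the argument is simply that subharmonicity of $T$ rules out local maxima.
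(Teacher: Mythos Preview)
Your proof is correct and shares its central step with the paper's: both exploit that $T\to+\infty$ at infinity and at each $z_i$ (since $\sigma_i>0$), so $T$ attains an interior minimum, giving $k_-\ge 1$; combining this with $k_+-k_-=n-1$ then yields $k\ge n+1$.

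The difference is in how $k_+-k_-=n-1$ is established. The paper simply invokes Lemma~\ref{S.P-S.R}, proved earlier via the Argument Principle on a large circle. You instead recover it through Morse theory: because $\Delta T\equiv 2>0$ there are no index-$2$ critical points, so the alternating count $k_--k_+$ equals $\chi(U)=1-n$. Your route is closer to Petters' original Morse-theoretic argument and is self-contained in that it does not rely on the harmonic Argument Principle; the paper's ``simplified exposition'' is lighter in that it avoids Morse theory on a non-compact domain by reusing an index identity already in hand. The two computations are close cousins (Poincar\'e--Hopf versus winding number), so the distinction is one of packaging rather than depth.
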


\begin{proof}
Let $\overline{D_R}$ be the closed disc centered at the origin of radius $R$ and for each $i$, let $D_{i,\epsilon}$ be the open disc of
radius $\epsilon$ centered at $z_i$. Let $G = \overline{D_R} \setminus \cup D_{i,\epsilon}$. 

If $\left|z\right|\to\infty$ or if $z\to z_i$, then $T(z) \to \infty$. Hence, we can choose $R > 0$ sufficiently large and $\epsilon > 0$ sufficiently small so that the minimum of $T$ is not attained on the boundary of $G$. Therefore, the minimum of $T$ occurs on the interior of $G$. This minimum corresponds to a s.r simple zero of \eqref{LENS EQ}. We can show that $n-1=k_{+}-k_{-}$ as in Lemma \ref{S.P-S.R}, where $k_{+}$ ($k_-$) is the number of s.p (s.r) zeros of \eqref{LENS EQ}. Since we have at least one s.r zero, $k_- \ge 1$, and $k_{+}+k_{-}\ge n-1 + 2k_- \ge n+1$, as desired.
\end{proof} 

\begin{proof}[Proof of Theorem~\ref{RATIONAL: PHYSICAL}] 
We use an inductive proof similar to that of Theorem~\ref{RATIONAL: POSITIVE MEASURES}.

Consider the case $n=2$. Placing mass $1$ at any point other than the origin will result in two images and the inductive step from the proof of Theorem~\ref{RATIONAL: POSITIVE MEASURES} shows that placing a sufficiently small $\epsilon$ mass anywhere else will generate exactly one more image.  Also, Rhie~\cite{RHIE} gives an example for a configuration that yields $5$ roots.  Thus, $S\masses_{2}(3)$ and $S\masses_{2}(5)$ are non-empty open sets.

From here, exactly the same inductive step can be applied as in the proof of Theorem~\ref{RATIONAL: POSITIVE MEASURES} to show that if $S\masses_n (n+1), \ldots, S\masses_n(5n-5)$ are non-empty, then $S\masses_{n+1}(n+3), \ldots, S\masses_{n+1}(5n-4)$ are non-empty since addition of the term $\epsilon/(z-\widetilde{z})$ corresponds to adding a small mass at $\widetilde{z}$. Propositions~\ref{5n-5 BOUND} and \ref{RATIONAL: 5n-7} give that $S\masses_{n+1}(5n-2)$ and $S\masses_{n+1}(5n)$  are non-empty.
\end{proof}

\begin{rmk*}
Each of the $S\masses_{n}(k)$ contains a positive massed rational function with masses arbitrarily close to the origin. Thus, these functions are consistent with the physical assumption that all the masses are close together and at small angular positions with respect to the light source. In the proof of Theorem~\ref{RATIONAL: PHYSICAL}, the base case of the induction can be done with two masses arbitrarily close to the origin. The inductive step from the proof of Theorem~\ref{RATIONAL: POSITIVE MEASURES} allowed us to add a small mass at an arbitrary point and, moreover, the proofs of Propositions~\ref{5n-5 BOUND} and \ref{RATIONAL: 5n-7} can be done with masses arbitrarily close to the origin.
\end{rmk*}

\vspace{0.1in}

\noindent\textbf{Acknowledgements.}
We thank Dmitry Khavinson for useful discussions about the problem and the referee for his or her helpful remarks.
The work of Bleher is supported in part by the NSF grant DMS-0969254. The work of Roeder is supported in part by the NSF grant DMS-1102597 and startup funds from the Department of Mathematics at IUPUI.


 


\end{document}